\documentclass [11pt] {article} 
\usepackage[affil-it]{authblk}

\usepackage{amsrefs} 
\usepackage{subcaption}

\usepackage{graphicx}
\usepackage{amssymb}
\usepackage{amsmath}
\usepackage[section]{placeins}
\usepackage{mathrsfs}  
\usepackage{multirow}
\usepackage{amsthm}
\usepackage{float}
\usepackage{bbm}
\usepackage{colortbl}
\usepackage{xcolor}

\usepackage{mwe}
\usepackage{mathtools}
\usepackage{hyperref}
\newtheorem{thm}{Theorem}[section]
\newtheorem{prop}[thm]{Proposition}
\newtheorem{lem}[thm]{Lemma}

\newtheorem{rem}[thm]{Remark}
\newtheorem{cor}[thm]{Corollary}
\newtheorem{ex}[thm]{Example}

\newcommand{\be}{\begin{equation}}
\newcommand{\ee}{\end{equation}}

\usepackage{datetime} 
\usepackage[T1]{fontenc}
\usepackage[english] {babel}

\let\Re=\undefined\DeclareMathOperator{\Re}{Re}

\newcommand{\wh}[1]{\widehat{#1}}
\newcommand{\ol}[1]{\overline{#1}}
\newcommand\Z{\mathbb{Z}}
\newcommand\R{\mathbb{R}}
\newcommand{\mc}[1]{\mathcal{#1}}
\newcommand{\del}{\delta}
\newcommand{\op}{\textup{op}}

\begin{document}

\baselineskip=15pt

\title{On the convergence of explicit formulas for $L^2$ solutions to the Benjamin--Ono and continuum Calogero--Moser equations}

\author{Yvonne Alama Bronsard, Thierry Laurens}

\author{
Yvonne Alama Bronsard%
\thanks{Massachusetts Institute of Technology, Cambridge, MA 02139, 
\texttt{yvonneab@mit.edu}}, \  
Thierry Laurens%
\thanks{University of Wisconsin--Madison, Madison, WI, 53706,
\texttt{laurens@math.wisc.edu}}}

\maketitle

\begin{abstract}
By developing discrete counterparts to recent advances in nonlinear integrability, and in particular to the discovery of explicit formulas, we design and analyze fully-discrete approximations to the Benjamin--Ono (BO) and  continuum Calogero--Moser (CCM) equations on the torus. We build on the key observation that discretizing such explicit formulas yields schemes that are
exact in time (requiring only spatial discretization) and have a computational cost independent of the final time $T$.
In this work, we first generalize the fully-discrete schemes of \cite{ABCD24} to include numerical approximations with better structure preservation properties, including the conservation of mass and momentum in the case of the (BO) equation. Secondly, building on recent analyses of the corresponding Lax operators, we extend the convergence results to this class of schemes for rough solutions $u(t)$ merely belonging to $L^2(\mathbb{T})$ for (BO) and $L^2_{+}(\mathbb{T})$ for (CCM), the latter  of which is precisely the scaling-critical regularity. Our main theorem states that the $L^2(\mathbb{T})$-norm of the error goes to zero as the truncation parameters go to infinity, uniformly on any bounded time interval $[-T,T]$. 
As an example, we apply our scheme to the (BO) equation with a square-wave initial profile, and obtain the first numerical evidence of the Talbot effect for (BO) supported by a rigorous convergence result.
\end{abstract}

\noindent {\scriptsize \textit{Keywords:} Nonlinear integrability, Lax operators, low-regularity, structure-preserving schemes, error analysis, Talbot effect}\\
\noindent {\scriptsize\textit{Mathematics Subject Classification:} Primary – 37K10, 65M70; Secondary –
65M15, 35Q55, 35Q35} \\


\section{Introduction}

 This work pertains to two systems.  The first
is the Benjamin--Ono equation
\begin{equation}\label{eq:BO}\tag{BO}
\partial_t u(t,x) = \partial_x \left( |\partial_x| u - u^2 \right)(t,x), \quad  u_{|t=0}(x) = u_0(x), \quad (t,x) \in \mathbb{R} \times \mathbb{T}, 
\end{equation}
\renewcommand*{\theHequation}{notag.\theequation}%
where $u(t,x)\in \mathbb{R}$ is a real-valued solution, and the nonlocal operator $|\partial_x|$ is defined in Fourier space as
\[
\widehat{|\partial_x|f}(k) = |k|\widehat{f}(k), \quad f\in L^2(\mathbb T).
\]
This is a nonlocal, integrable, dispersive model introduced by Benjamin \cite{BO} and Davis--Acrivos \cite{davis1967solitary} to describe internal waves in a deep stratified fluid, and was later popularized by Ono \cite{ono1975algebraic}.
In this work we study numerical approximations of \eqref{eq:BO} for initial data $u_0 \in L^2(\mathbb{T})$. In this setting, {global well-posedness was first proved by Molinet \cite{Molinet08}} using Tao’s gauge transform combined with Bourgain space estimates, and was revisited in Molinet--Pilod \cite{Molinet12}. Sharper results were later obtained by Gérard–Kappeler–Topalov \cite{Gerard23WP}, who exploited the complete integrability of the equation and, in particular, a Birkhoff normal form transformation: they showed that \eqref{eq:BO} is globally well-posed in $H^s$, $s>-\frac12$, and ill-posed otherwise. We note that the analogous $L^2$ well-posedness result on the line was established by Ionescu--Kenig \cite{IK} and revisited in \cites{Molinet12,IT,Talbut}, and sharp well-posedness was obtained by Killip--Laurens--Vi\c{s}an \cite{Killip24}.
For a survey of earlier results on \eqref{eq:BO}, we refer to the book of Klein--Saut \cite[Chapter 3]{KS-book-BO}, and for the rigorous derivation of \eqref{eq:BO} as an internal water wave model to Paulsen \cite{P24-BO}.
\newline

The second equation we consider is the focusing ($+$ sign) or defocusing ($-$ sign) continuum Calogero--Moser equation, also known as the Calogero–Sutherland derivative nonlinear Schr\"odinger (DNLS) equation,
\begin{equation}\label{eq:CS}\tag{CCM}
i\partial_t u + \partial_x^2u \pm \frac{2}{i} u\partial_x \Pi(|u|^2) = 0,\quad  u_{|t=0}(x)= u_{0}(x), \quad (t,x) \in \mathbb{R} \times \mathbb{T}. 
\end{equation}%
\renewcommand*{\theHequation}{notag2.\theequation}%
The focusing equation was derived by Abanov--Bettelheim--Wiegmann \cite{abanov2009integrable} as a continuum limit of the Calogero--Moser particle system, and the defocusing model was introduced by Pelinovsky \cite{pelinovsky1995intermediate} to describe the modulation of an approximately monochromatic wave-packet solution to \eqref{eq:BO}.  Here, the Riesz--Szeg\H o projector $\Pi$ is defined in Fourier space as 
\be\label{eq:Pi}\tag{$\Pi$}
\widehat{\Pi f}(k) = \mathbbm{1}_{k\ge 0} \, \widehat{f}(k), \quad f\in L^2(\mathbb T).
\ee
\renewcommand*{\theHequation}{notag3.\theequation}%
This is an orthogonal projection on $L^2$, and the corresponding closed subspace is the Hardy space
\begin{equation}\label{eq:hardy}
L^2_+ = \{ f\in L^2 : \wh{f}(k) = 0 \text{ for }k<0 \} .
\end{equation}
In the work \cite{Badreddine24}, Badreddine shows that the \eqref{eq:CS} equation is globally well-posed at the scaling-critical regularity, namely in $L^2_{+}$, with the requirement
\[
\|u_0\|_{L^2}^2 = \frac{1}{2\pi} \int_{-\pi}^\pi |u_0|^2\, dx < 1
\]
in the focusing case.  As explained in~\cite{Badreddine24}, this threshold corresponds to the $L^2$-norm of certain traveling wave profiles; however, these profiles do not generate all of the traveling wave solutions on $\mathbb{T}$, which in general can have arbitrary $L^2$ norm \cite{Badreddine25}.  This threshold for well-posedness in the focusing case is most likely sharp:   on the line $\mathbb{R}$,  Hogan--Kowalski \cite{HK24} exhibited smooth solutions that blow up in (at most) infinite time, and later Kim--Kim--Kwon~\cite{Kim24} constructed finite-time blow-up solutions with mass arbitrarily close to the threshold\footnote{On the line $\mathbb{R}$, the threshold for well-posedness is $\| u_0\|_{L^2(\mathbb{R})} =  \int_{\mathbb{R}} |u_0|^2 \le 2\pi$.}.
\newline

In this work we develop discrete counterparts to recent advances in nonlinear integrability.
First, we discuss the {\it design} of the schemes, which builds on the discovery of {\it explicit formulas} for certain nonlocal integrable PDEs. 
Explicit formulas on the torus $\mathbb{T}$ were established by Gérard \cite{G} for the \eqref{eq:BO} equation and by Badreddine \cite{Badreddine24} for the \eqref{eq:CS} equation.
We refer to Remark~\ref{rem:szego} for a detailed discussion on the cubic Szeg\H{o} equation, the first equation for which an explicit formula was found \cite{gerard2015explicit} and, to date, the only other equation on the torus with a known explicit formula.

These explicit formulas have had remarkable consequences, shedding light on the global well-posedness and qualitative behavior of the underlying flow. 
In particular, they have enabled global well-posedness results and the characterization of the traveling waves for the \eqref{eq:CS} equation \cite{Badreddine24, Badreddine25}, as well as the analysis of zero-dispersion limits for both \eqref{eq:CS} and \eqref{eq:BO}, see the works of Badreddine \cite{badreddine2024zero}, Gassot \cite{Gassot}, and M{\ae}hlen \cite{maehlen2025zero}.
In the case of the real line $\mathbb{R}$, analogous formulas have also proved to be an invaluable tool for a wide variety of problems. 
For \eqref{eq:BO}, this includes the zero-dispersion limit \cite{GGM24b,Gerard25}, multisoliton dynamics \cite{Sun}, the evolution of rational initial data \cite{GGM24a}, and, very recently, the soliton resolution conjecture, solved by Gérard--Gassot--Miller \cite{GGM26} for sufficiently regular, decaying solutions.  
For \eqref{eq:CS} on $\mathbb{R}$, also known as the Calogero--Moser DNLS, these formulas have played a central role in the analysis of well-posedness \cite{KLV25}, the establishment of scattering results \cite{chen2025scattering}, as well as in the study of turbulent behavior \cite{HK24}.

On the discrete side, these formulas have also proven valuable: the study of Alama Bronsard--Chen--Dolbeault \cite{ABCD24} was the first to employ them for numerical approximation, providing significant insight into the long-time dynamics of the equations through quantitative error bounds.

In the next section, we present these formulas and outline the key ideas behind their numerical approximation.

\subsection{Explicit formulas}
In the definition of the explicit formulas, three essential operators arise: the Riesz--Szeg\H{o} operator defined in \eqref{eq:Pi}, and the shift $S^*$ and Lax operator $L_{u_0}$ which are well-defined on the Hardy space $L^2_{+}$ from \eqref{eq:hardy}, as presented next. 

First, let $S^* = \Pi (e^{-ix}\ \cdot)$ denote the operator on $L^2_+$ corresponding to the left-shift in frequency variables:
\[
\wh{S^*f}(k) = \mathbbm{1}_{k\ge 0}\, \wh{f}(k+1), \quad f \in L^2_{+}.
\]
Next, the Lax operators for \eqref{eq:BO} and \eqref{eq:CS} are semi-bounded self-adjoint operators defined on $H^1_{+} = H^1 \cap L^2$ by
\begin{equation}\label{eq:lax-ops}
 L_{u_0}^{\text{BO}}f = -i\partial_xf - \Pi(u_0f) \quad {\text and} \quad
L_{u_0}^\text{CS} f = -i\partial_xf \mp u_0 \Pi(\overline{u_0}f),
\end{equation}
which are well defined for $u_0 \in L^2$ and $u_0 \in L^2_{+}$, respectively; see Proposition \ref{prop:lax} and \ref{prop:lax'}.

Once mapped into frequency space, Gérard's \cite{G} and  Badreddine's \cite{Badreddine24} explicit formulas are given as follows: for $k \ge 0$,
\begin{align}\label{eq:explicitForm}
\widehat{u}_{\text{BO}}(t,k) =\left\langle (e^{it(I+2L_{u_0}^{\text{BO}})}S^{*})^k \Pi u_0, 1 \right\rangle \quad \text{and}\quad
\widehat{u}_{\text{CS}}(t,k) =\left\langle (e^{-it(I+2L_{u_0}^{\text{CS}})}S^{*})^k  u_0, 1 \right\rangle.
\end{align}
For \eqref{eq:BO}, when $k<0$ we take $\widehat{u}(t,k) = \overline{\widehat{u}(t,-k)}$ since $u$ is real-valued, while for \eqref{eq:CS}, when $k<0$, $\widehat{u}(t,k) = 0$ since $u(t) \in L^2_{+}$.

\begin{rem}
In the case of $\mathbb{R} $, an explicit formula for \eqref{eq:BO} is also presented in Gérard's work \cite{G}, and its generalization to the full hierarchy of \eqref{eq:BO} is presented in Killip--Laurens--Vi\c san~\cite{Killip24}. For \eqref{eq:CS}, we refer to Killip--Laurens--Vi\c san \cite{KLV25} for an explicit formula on the real line $\mathbb{R}$.
\end{rem}
\medskip

In this work, instead of discretizing the underlying equations \eqref{eq:BO} and \eqref{eq:CS} as one would classically do, we obtain sharper results by exploiting the strong integrable structure of the equations encoded by the explicit formulas \eqref{eq:explicitForm}. This is the second time, after the work of \cite{ABCD24}, that this new approach is taken. The huge advantage of working with these explicit formulas instead of the underlying equations lies in the key observation that discretizing such explicit formulas yields schemes that are exact in time (requiring only spatial discretization) and have a computational cost independent of the final time 
$T$. As a result, the schemes are significantly more accurate and can be simulated up to arbitrarily large~times.

In particular, the formulas \eqref{eq:explicitForm}, written as a characterization of the $k$-th Fourier coefficient of the solution, are perfectly suited for approximating numerically, via a spectral discretization. Let $K$ be the number of Fourier frequencies used in the discretization and for $j\in \mathbb{N}$, we define the truncated Riesz--Szeg\H{o} projections $\Pi_j$ as
\begin{equation*}
\wh{ \Pi_j f}(k) = \mathbbm{1}_{0\le k <j} \wh{f}(k) \quad\text{and}\quad \Pi_\infty = \Pi .
\end{equation*}
Among the many possible discretizations of these explicit formulas, the authors of \cite{ABCD24} introduce the simplest one: by replacing, in \eqref{eq:explicitForm}, the operators $\Pi$ with their frequency cut-offs $\Pi_{K}$, they obtain a schemes which in the case of \eqref{eq:BO} reads
\[
\widehat{u_K}(t,k)=\langle(e^{it(I+2L_{K})}S^{*})^k\Pi_K u_0,1\rangle, 
\quad \text{with}\quad
L_{K}f=-i\partial_xf-\Pi_K(u_0\Pi_Kf), \quad f \in L^{2}_{+}.
\]
Using the commutation properties of the Lax pairs with the shift operator $S^{*}$, the explicit formula~\eqref{eq:explicitForm}, and an equivalence of norms, the authors then
prove convergence for sufficiently smooth solutions $u \in C([0,T], H^s)$, $s>1$, with an optimal linear dependence of the error constant $C_T$ on the final time \( T \), see also Remark \ref{rem:cv-highreg}.

In this work, we first generalize the above scheme of \cite{ABCD24} to the class of schemes \eqref{eq:newScheme}, which we will introduce in Section \ref{sec:schemes} below. 
The idea is to add degrees of freedom in the discretization: instead of truncating at each iteration $k \in \{0,...,K-1 \}$ the projections $\Pi$ in the Lax operator $L$ at frequency $K$, we introduce a sequence of truncation parameters denoted by $n_K(k)$, and allow for different truncations $L_{n_K(k)}$ depending on the iteration $k$. This additional degree of freedom allows us to uncover novel schemes that exactly preserve both the average and the $L^2$-norm of the solution on the discrete level, offering improved {\it structure-preserving} properties.

Secondly, we extend the convergence results to the above class of schemes for rough solutions $u(t)$ merely belonging to $L^2(\mathbb{T})$ or $L^2_{+}(\mathbb{T})$ for \eqref{eq:BO} and \eqref{eq:CS} respectively  (see Theorem~\ref{thm:main} below). We emphasize the fact that the tools used here for proving convergence are completely different from those of previous numerical works, and are tailored to treat rough solutions with no positive Sobolev regularity.

\begin{rem}[The cubic Szeg\H o equation]\label{rem:szego}
Gérard and Grellier \cite{gerard2015explicit} discovered an explicit formula for the cubic Szeg\H{o} equation
\begin{equation}
\partial_t u= \Pi(|u|^2u)
\label{S}\tag{S}
\end{equation}
for initial data $u_0 \in H^s_{+}$, $s\ge 1/2$.
Building upon this work, G\'erard and Pushnitski \cite{GP} used the explicit formula to continuously extend the data-to-solution map to $L^2_+$, yielding global well-posedness of the equation for $u_0 \in L^2_{+}$.

We note that a scheme for \eqref{S} was constructed in the work \cite{ABCD24} based on the explicit formula \cite{gerard2015explicit}; however, the present work only addresses the cases of \eqref{eq:BO} and \eqref{eq:CS}.  For sufficiently regular initial data $u_0 \in H^s_{+}$, $s\ge \frac12$, for which the explicit formula of Gérard and Grellier \cite{gerard2015explicit} applies, the class of numerical schemes constructed here can be adapted to the case of the \eqref{S} equation. However, our convergence analysis does not hold for \eqref{S}.

Indeed, \eqref{S} differs significantly from the other two equations \eqref{eq:BO} and \eqref{eq:CS}.  The explicit formula in \cite{gerard2015explicit,GP} is based on the Lax pair
\[
H_{u_0}(f) = \Pi(u_0 \bar{f })\quad \text{and} \quad K_{u_0}^2f=H_{u_0}^2f-\langle f, u_0\rangle u_0, \quad f \in L^2_{+}.
\]
Unlike the Lax operators \eqref{eq:lax-ops}, the operator $H_{u_0}$ does not contain a diagonal derivative term $-i\partial_x$.
 This is closely related to the lack of dispersion in \eqref{S}, as setting $u_0 = 0$ in the Lax operators appearing in the explicit formula \eqref{eq:explicitForm} yields a formula for the linear flow.
Ultimately, we will construct the operators \eqref{eq:lax-ops} for $u_0$ in $ L^2$ or $L^2_{+}$ as infinitesimally norm-bounded perturbations of $-i\partial_x$, and so this analysis will not apply to $H_{u_0}$ (see also Remark \ref{rem:perturbation}).  By comparison, the work \cite{gerard2015explicit} proved that $H_{u_0}$ is a bounded operator when $u_0\in H^s_+$ with $s\geq\frac12$.  The authors of \cite{GP} then discovered a way to construct $H_{u_0}$ as an unbounded operator for merely $u_0\in L^2_+$ via a novel approach specific to Hankel operators.
\end{rem}

\subsection{Results}

Our main result is the convergence for rough data $u_0 \in L^2(\mathbb{T})$ and $u_0 \in L^2_{+}(\mathbb{T})$, of the new class \eqref{eq:newScheme} of fully discrete schemes for \eqref{eq:BO} and \eqref{eq:CS}, as presented next.

\begin{thm}\label{thm:main}
Let $u(t)$ denote the global solution to the~\eqref{eq:BO} or~\eqref{eq:CS} equation with initial data $u_0$ in $L^2(\mathbb{T})$ or $L^2_+(\mathbb{T})$ respectively, with $\| u_0 \|_{L^2} < 1$ in the case of the focusing~\eqref{eq:CS} equation. Denote by $u_K(t)$ the numerical scheme~\eqref{eq:newScheme} corresponding to $u_0$ with $K$ frequencies and truncation parameters $\big(n_K(k)\big)_{k\geq 0}$.  If $n_K(k) \xrightarrow{K\to\infty} \infty$, for each $k\ge 0$,
then for any $T>0$ we have
\begin{equation}\label{eq:cv}
\sup_{t\in [-T, T]}\| u_{K}(t) - u(t)\|_{L^2} \xrightarrow{{ K\to\infty}
} 0.
\end{equation}
\end{thm}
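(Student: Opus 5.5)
We will prove \eqref{eq:cv} by splitting the error in frequency space into low and high modes and treating each Fourier coefficient separately through the explicit formula \eqref{eq:explicitForm}. We take the scheme \eqref{eq:newScheme} to have the form $\wh{u_K}(t,k)=\langle \prod_{j} (e^{\pm it(I+2L_{n_K(j)})}S^*)\, \Pi_{K} u_0,1\rangle$ for $0\le k<K$, with coefficients zero for $k\ge K$ (and the conjugate symmetry for BO). Here $L_n f=-i\partial_x f-\Pi_n(u_0\Pi_n f)$, or the CCM analogue. For any fixed $M$ we write
\[
\|u_K(t)-u(t)\|_{L^2}^2 \lesssim \sum_{|k|<M}|\wh{u_K}(t,k)-\wh u(t,k)|^2 + \sum_{|k|\ge M}|\wh u(t,k)|^2+\sum_{|k|\ge M}|\wh{u_K}(t,k)|^2 .
\]
The plan has three steps.

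\textbf{Step 1: convergence of each fixed Fourier coefficient, uniformly in $t\in[-T,T]$.} Fix $k$. We show that $L_{n}\to L_{u_0}$ in the strong resolvent sense as $n\to\infty$. By Propositions \ref{prop:lax} and \ref{prop:lax'}, both $L_{u_0}$ and $L_n$ are infinitesimally form-bounded perturbations of $-i\partial_x$, with bounds uniform in $n$, since $\|\Pi_n(u_0\Pi_n\cdot)\|$ is controlled by $\|u_0\|_{L^2}$. The resolvents then converge strongly: first on smooth vectors, via the second resolvent identity together with $\Pi_n g\to g$ in $L^2$, and then everywhere by the uniform bound. Trotter--Kato then gives $e^{itL_n}\to e^{itL_{u_0}}$ strongly, uniformly for $t$ in compact sets. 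A finite composition of strongly convergent, uniformly bounded (unitary) operators applied to the convergent vector $\Pi_K u_0\to \Pi u_0$ converges. Since $n_K(j)\to\infty$ for each of the finitely many $j\le k$, we get $\wh{u_K}(t,k)\to \wh u(t,k)$ uniformly on $[-T,T]$. Uniformity in $t$ follows from equicontinuity in $t$ combined with pointwise convergence, or directly from the uniform-on-compacts form of Trotter--Kato.

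\textbf{Step 2: tail of the exact solution.} The family $\{u(t):t\in[-T,T]\}$ is compact in $L^2$, because $t\mapsto u(t)$ is continuous into $L^2$ by well-posedness \cite{Gerard23WP,Badreddine24}. Its high-frequency tails are therefore uniformly small: $\sup_{|t|\le T}\sum_{|k|\ge M}|\wh u(t,k)|^2<\varepsilon$ for $M$ large.

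\textbf{Step 3: tail of the scheme, which is the main obstacle.} Bounded $L^2$ norm is not enough here; we need equi-smallness of the high modes of $u_K(t)$, uniformly in $K$ and $t$. We would use $L^2$ conservation for the scheme, either exact (for the structure-preserving choices of $n_K$) or approximate. From the identity $\|u_K(t)\|^2=\sum_k|\wh{u_K}(t,k)|^2$ and the limit $\|u_K(t)\|_{L^2}\to\|u_0\|_{L^2}=\|u(t)\|_{L^2}$, uniformly in $t$, Step 1 gives
\[
\sum_{|k|\ge M}|\wh{u_K}(t,k)|^2=\|u_K(t)\|^2-\sum_{|k|<M}|\wh{u_K}(t,k)|^2\to \sum_{|k|\ge M}|\wh u(t,k)|^2<\varepsilon .
\]
So the real work is to prove $\|u_K(t)\|_{L^2}\to\|u_0\|_{L^2}$ uniformly in $t$ for general $n_K$. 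We would try to show this through a discrete trace or Parseval identity. The vectors $v_k=\prod_{j<k}(e^{itL}S^*)\Pi_K u_0$ satisfy $\|v_k\|^2-\|v_{k+1}\|^2=|\langle v_k,1\rangle|^2$, because $S^*$ is the adjoint of an isometry with $\|S^*f\|^2=\|f\|^2-|\langle f,1\rangle|^2$ and $e^{itL}$ is unitary. Telescoping gives
\[
\sum_{k<K}|\wh{u_K}(t,k)|^2=\|\Pi_K u_0\|^2-\|v_K\|^2\le\|\Pi u_0\|^2 .
\]
This yields the upper bound on the high modes directly. Combined with Step 1 and the exact identity $\sum_k|\wh u(t,k)|^2=\|\Pi u_0\|^2$, it gives $\limsup_K\sum_{M\le k<K}|\wh{u_K}(t,k)|^2\le\sum_{k\ge M}|\wh u(t,k)|^2$, uniformly in $t$. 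Negative frequencies for BO follow by conjugation.

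Combining the three steps and then letting $\varepsilon\to0$ yields \eqref{eq:cv}.
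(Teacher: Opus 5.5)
Your proof is correct. It has the same overall architecture as the paper: coefficient-wise convergence from convergence of the unitary groups, then an upgrade to strong convergence using the telescoping bound $\sum_{0\le k<K}|\wh{u_K}(t,k)|^2\le\|\Pi_{n_K(0)}u_0\|_{L^2}^2$, which is Proposition~\ref{prop:L2 consv}. Two of the key steps are carried out differently.

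\emph{Convergence of the groups.} The paper proves norm resolvent convergence with an explicit $O(1/n)$ rate (Lemmas~\ref{lem:resolv 2} and~\ref{lem:resolv 2'}). It then proves its own uniform statement, Proposition~\ref{lem:resolv}, by approximating $e^{itx}\chi_{\le M}(x)$ with polynomials in $t$ and $(x+\kappa)^{-1}$ via Stone--Weierstrass. You use only strong resolvent convergence and quote Trotter's theorem, which is more economical.

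However, the composition argument needs uniformity over the $t$-dependent vectors $S^*(e^{\pm 2itL}S^*)^{j-1}\Pi u_0$, $t\in[-T,T]$, not just uniformity in $t$ for a fixed vector. You should state two things explicitly:
\begin{itemize}
\item this family is compact, by joint continuity of $(t,f)\mapsto e^{itL}f$;
\item uniformly bounded operators that converge strongly, uniformly in $t$, converge uniformly on compact sets of vectors, by an $\epsilon$-net argument.
\end{itemize}
This is exactly what the paper packages into Proposition~\ref{lem:resolv}.

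Your sketch of strong resolvent convergence via ``$\Pi_n g\to g$ in $L^2$'' is also too thin. The term $\Pi_n u_0(\Pi-\Pi_n)g$ with $g=R_\infty f\in H^1_+$ requires $\|(\Pi-\Pi_n)g\|_{L^\infty}\to 0$, because multiplication by $u_0\in L^2$ is unbounded on $L^2$. Alternatively, cite the paper's resolvent lemmas. For \eqref{eq:CS}, the relative bound depends on $u_0$ itself rather than only on $\|u_0\|_{L^2}$ (Lemma~\ref{lem:CS}), though it is still uniform in $n$, which is all you use.

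\emph{Upgrade to strong convergence.} The paper first proves uniform weak convergence. It then uses $\|u_K-u\|^2\le 2\Re\langle u-u_K,u\rangle$ together with a finite $\del$-net of the orbit $\{u(t)\}$. Your low/high frequency splitting is the same Radon--Riesz mechanism written in Fourier variables, and it skips the separate weak-convergence step. It combines tightness of the compact orbit with the bound $\limsup_K\sup_t\big(\sum_{M\le k<K}|\wh{u_K}(t,k)|^2-\sum_{k\ge M}|\wh u(t,k)|^2\big)\le 0$. That bound uses conservation of $\|\Pi u(t)\|_{L^2}$, which for \eqref{eq:BO} follows from conservation of mass and $L^2$ norm.

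\emph{Minor points.}
\begin{itemize}
\item The seed is $\Pi_{n_K(0)}u_0$ rather than $\Pi_K u_0$; this is harmless since $n_K(0)\to\infty$.
\item The operators in your $v_k$ should be $L_{n_K(j)}$, not $L$.
\item Your detour suggesting one must prove $\|u_K(t)\|_{L^2}\to\|u_0\|_{L^2}$ is unnecessary. The one-sided bound suffices, as your final display shows.
\end{itemize}
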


A direct corollary of the above theorem is the asymptotic convergence of the $L^2$ norm of the numerical approximation to the exact solution.
\begin{cor}
Under the hypotheses of Theorem \ref{thm:main}, we have
\[
\|u_{K}(t) \|_{L^2} \xrightarrow{ K\to\infty}
\| u(t)\|_{L^2},
\]
uniformly for $t\in [-T,T]$.
\end{cor}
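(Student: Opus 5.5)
The corollary follows directly from Theorem~\ref{thm:main} and the reverse triangle inequality in $L^2(\mathbb{T})$. For each $t\in[-T,T]$ we have
\[
\Big| \|u_K(t)\|_{L^2} - \|u(t)\|_{L^2} \Big| \le \|u_K(t)-u(t)\|_{L^2} .
\]
Taking the supremum over $t\in[-T,T]$ on both sides gives
\[
\sup_{t\in[-T,T]} \Big| \|u_K(t)\|_{L^2} - \|u(t)\|_{L^2} \Big| \le \sup_{t\in[-T,T]} \|u_K(t)-u(t)\|_{L^2}.
\]
By \eqref{eq:cv}, the right-hand side tends to zero as $K\to\infty$. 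This is precisely uniform convergence of $\|u_K(t)\|_{L^2}$ to $\|u(t)\|_{L^2}$ on $[-T,T]$.

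The only point to check is that both norms are finite, so that the difference on the left is meaningful. For the exact solution, $u(t)\in L^2$ by the global well-posedness results recalled in the introduction. For the scheme, $u_K(t)$ is a trigonometric polynomial with finitely many frequencies, so its $L^2$ norm is finite.

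There is no real obstacle here; all of the analytic difficulty lies in Theorem~\ref{thm:main} itself, which we take as given. We note in passing that for the mass- and momentum-conserving schemes mentioned above, $\|u_K(t)\|_{L^2}$ is independent of $t$. Combined with conservation of the $L^2$ norm along the exact flow, the corollary then reduces to convergence of $\|u_K(0)\|_{L^2}$ to $\|u_0\|_{L^2}$.
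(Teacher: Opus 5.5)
Your proof is correct and matches the paper's argument. The paper calls the corollary a direct consequence of Theorem~\ref{thm:main}, which amounts to applying the reverse triangle inequality $\bigl|\|u_K(t)\|_{L^2}-\|u(t)\|_{L^2}\bigr|\le\|u_K(t)-u(t)\|_{L^2}$ and taking the supremum over $t\in[-T,T]$ in \eqref{eq:cv}. Your closing remark about the schemes satisfying \eqref{eq:m-L2} is also correct, though not needed: there $\|u_K(t)\|_{L^2}$ is constant in $t$ by Propositions~\ref{prop:average} and~\ref{prop:l2norm-pres}.
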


Before comparing our $L^2$ convergence result with those in the literature, we start by discussing the statement of Theorem \ref{thm:main} and then illustrate it through the example of the Talbot effect.

In the above theorem, we only require simple convergence of the truncation parameters $K$ and $(n_{K}(k))_{k\geq 0}$, without a prescribed order. This is the most general notion of convergence. We emphasize that it is natural to ask for the above condition on $K$ and $(n_{K}(k))_{k\geq 0}$ as we want the truncated operators to asymptotically converge to their continuous counterpart. 

We contend that the mode of convergence in our result is the strongest one can expect for $L^2$~data. First, since no additional Sobolev regularity is assumed, the best possible outcome is $L^2$-convergence as in \eqref{eq:cv}, with no quantitative rate.
Secondly, as the eigenvalues of the truncated Lax operator $L_n$ are not equal to those of $L$ for any $n\in \mathbb{N}$, the propagator $e^{itL_n}f$ will necessarily drift away from $e^{itL}f$ at sufficiently large times. Hence, bounded time intervals $[-T,T]$ are required in the analysis of the unitary groups $t\mapsto e^{itL}$ that appear in the explicit formulas \eqref{eq:explicitForm} and thereby in obtaining an $L^2$ convergence result.

The proof of our main result relies upon a careful analysis of the Lax operators $L=L_{u_0}$ and their corresponding unitary groups $t\mapsto e^{itL}$.  Specifically, the main question we need to address is: If we replace the Lax operator $L$ by a truncation $L_n$, how do we efficiently estimate the divergence of $e^{itL_{n}}f$ from $e^{itL}f$?  Our solution is inspired by the analyzes of the Lax operators in \cite{Killip24,KLV25}, and will be presented in Section~\ref{sec:pf}.
\bigskip

\noindent
{\bf Example: The Talbot effect.}
Our convergence result covers a much broader class of initial data than the work \cite{ABCD24}.  In particular, one important example included in our analysis (in the \eqref{eq:BO} case) is that of a square-wave profile, which belongs to $H^{\frac{1}{2}-}$ (but not to $L^2_+$).  In Figure~\ref{fig:talbot}, we plot the evolution of the linear and nonlinear \eqref{eq:BO} equations starting from the initial data $u_0(x)=\operatorname{sgn}(x)$ at certain rational and irrational times.

\begin{figure}[!b]
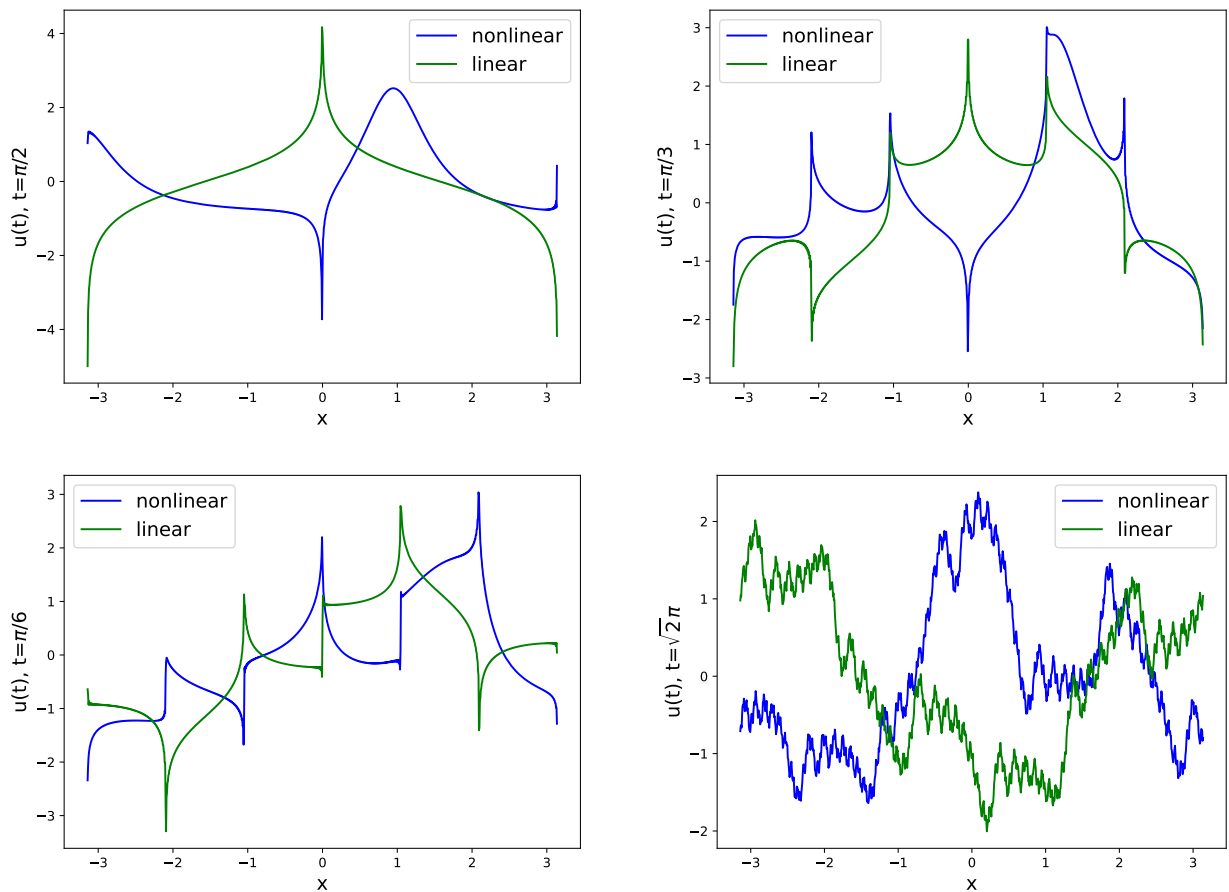

  \centering

  \begin{subfigure}{0.48\textwidth}
    \centering
    \includegraphics[width=\linewidth]{Talbot_pi2.pdf}
  \end{subfigure}
  \hfill
  \begin{subfigure}{0.48\textwidth}
    \centering
    \includegraphics[width=\linewidth]{Talbot_pi3.pdf}
  \end{subfigure}

  \medskip

  \begin{subfigure}{0.48\textwidth}
    \centering
    \includegraphics[width=\linewidth]{Talbot_pi6.pdf}
  \end{subfigure}
  \hfill
  \begin{subfigure}{0.48\textwidth}
    \centering
    \includegraphics[width=\linewidth]{Talbot_sqrt2.pdf}
  \end{subfigure}

  \caption{Evolution of the \eqref{eq:BO} equation from the step-function initial data
$u_0(x)=\operatorname{sgn}(x)$.
The first three panels illustrate the Talbot effect at the rational times
$t=\pi/2$, $t=\pi/3$, and $t=\pi/6$, while the last panel corresponds to the
irrational time $t=\sqrt{2}\,\pi$.
The scheme~\eqref{eq:linscheme}, shown in green,
approximates the linearized equation.
The scheme~\eqref{eq:newScheme BO}, shown in blue, uses the spectral truncation
$n(k)=K/2$ for $0\le k\le K/2$ and $n(k)=0$ otherwise and approximates \eqref{eq:BO}.
We take $K=2^{10}$.}
  \label{fig:talbot}
\end{figure}

Already, the linear dynamics $\partial_t u = \partial_x |\partial_x| u$ exhibits remarkable behavior: the profile $x\mapsto u(t,x)$ is continuous if and only if $t/2\pi$ is irrational, whereas when $t/2\pi$ is rational it possesses jump discontinuities and vertical asymptotes.  A rigorous explanation of this phenomenon was provided in \cite{boulton2021new}*{Th.~2}, which demonstrates that when $t/2\pi$ is rational, the solution is a finite linear combination of translations of the initial condition $u_0$ (which produces jump discontinuities) and its Hilbert transform $Hu_0$ (which introduces vertical asymptotes).  This is known as the Talbot effect for the linear \eqref{eq:BO} equation, named after the physicist who empirically  discovered analogous behavior in the context of the linear Schr\"odinger equation \cite{Talbot}. 

Evidently, Figure~\ref{fig:talbot} suggests that the nonlinear \eqref{eq:BO} equation exhibits a similar phenomenon.  This is corroborated by Theorem~\ref{thm:main}.  
However, a rigorous proof of the Talbot effect for the \eqref{eq:BO} equation remains an open problem.  

By comparison, it is well-known that in the case of the Schr\"odinger equation, the Talbot effect persists in the presence of a cubic nonlinearity, as was demonstrated by Erdo{\u{g}}an and Tzirakis
\cite{erdogan2014talbot}. 
Previously, these authors had established an analogous result for the Korteweg--de Vries equation in \cite{erdogan2013}.  Both results come as a consequence of a \emph{nonlinear smoothing} estimate, which establishes that the difference between the linear and nonlinear evolutions possesses more regularity than each solution does individually.  For example, in the context of the cubic nonlinear Schr\"odinger equation, both the linear and nonlinear flows lie in $H^{\frac12 -}$, the space to which the initial data belongs, whereas their difference lies in $H^{1-}$.  In particular, this implies that the difference is a continuous profile at each point in time, and so the nonlinear flow shares the same discontinuities as the linear flow.

Figure~\ref{fig:talbot} indicates that such a strategy cannot succeed for the \eqref{eq:BO} equation!  Already when $t=\frac{\pi}{2}$, we see that the difference between the linear and nonlinear evolutions is a discontinuous profile, and so does not lie in $H^{\frac12 +}$.

Nevertheless, one might hope that a nonlinear smoothing result might be restored after performing a change of variables.  This was pursued in \cites{IMOS,PKT}, which consider the gauge transformation $u\mapsto w$ of Tao \cite{Tao}, and establish a nonlinear smoothing estimate for the equation solved by $w$.  It follows that the solution $w(t,x)$ corresponding to a square-wave initial profile $w(0,x)$ exhibits a Talbot effect, but it is not immediately clear if this implies a Talbot effect in terms of the original variable $u$. We leave this question for future investigation.

Although incidental to our analysis, we note that Talbot's experiments can also be described by the Schr\"odinger equation with Dirac comb initial data, which lies in $H^{-\frac12 -}$; for details, see Banica--Vega \cite{banica2020evolution}, Eceizabarrena \cite{eceizabarrena2021talbot}, and the references therein.  However, for the \eqref{eq:BO} equation, this initial value problem is ill-posed.  Not only is this regularity outside of the regime for well-posedness, but the Dirac comb is \emph{precisely} the example that was leveraged by  Angulo Pava--Hakkaev~\cite{APH} to prove ill-posedness, which shows that the solution would have to travel infinitely fast in a certain sense.  Moreover, canceling these spatial translations by renormalizing the equation does not immediately resolve the issue, as the resulting system would still be ill-posed in $H^{-\frac12 -}$ by the example in Gérard--Kappeler--Topalov \cite{Gerard23WP}*{Appendix~B}.
\newline

\noindent
{\bf Comparison with convergence results in the literature.}
To our knowledge, this is the first error analysis result that establishes convergence of a numerical scheme in \( C([0,T], L^2(\mathbb{T})) \) norm for merely \(L^2\) initial data, and for {\it arbitrary} times \(T>0\). 

For~\eqref{eq:CS},  the only convergence analysis available is provided in~\cite{ABCD24}, which applies only to sufficiently smooth solutions in $H^s_+$ with $s>1$, see also Remark \ref{rem:cv-highreg}. 

For \eqref{eq:BO}, existing convergence results for rough data \(u_0 \in L^2\) have been obtained only in the weaker norm \( L^2([0,T], L^2_{\mathrm{loc}}(\mathbb{R})) \), for {\it some} finite \(T>0\), and are restricted to the real line \( \mathbb{R} \).
Namely, on \( \mathbb{R} \), by combining Kato smoothing effect with Aubin--Lions compactness, Galtung \cite{galtung-18} proved convergence of a Crank--Nicolson Galerkin scheme for \eqref{eq:BO}  in the space \( L^2([0,T], L^2_{\mathrm{loc}}(\mathbb{R})) \), for \(u_0 \in L^2(\mathbb{R})\)  and some \(T>0\). This result builds upon the earlier error analyses of \cite{dutta2016note, dutta2015convergence}, where the same convergence was established for the Korteweg--de Vries (KdV) equation using similar arguments based on Kato smoothing; see also \cite{amorim2013convergence, holden2015convergence} for earlier works of the same kind.
On the torus \( \mathbb{T} \), such local smoothing effects do not hold, and therefore a different approach is required. For the Benjamin--Ono equation, this was considered in \cite{dutta-16}, but only for regular data. In that work, convergence as \( \Delta x \to 0\) and \( \Delta t = O(\Delta x)\) of a Crank--Nicolson scheme was shown in the more regular norm \( C([0,T] \times \mathbb{T}) \), for some \(T>0\) and \(u_0 \in H^2(\mathbb{T})\).

Finally, we note that in recent years, several works have introduced refined error analysis techniques based on discrete Bourgain spaces or discrete Strichartz estimates to establish convergence of numerical schemes for semilinear dispersive equations with low-regularity data. These approaches provide explicit convergence rates in \( C([0,T], L^2) \) norm in terms of the Sobolev regularity of the initial data $u_0 \in H^s$, $s>0$, for any final time $T$ below the maximal time of existence. However, they do not cover the case of \(L^2\)-solutions (\(s=0\)); see for example \cite{rousset-22} for the KdV equation and \cite[Chapter 1, Section 3]{AB-2025book} for an overview of related results. 
To enable a uniform $L^2$-convergence result for $L^2$ data we introduce completely different techniques based on compactness arguments and Lax operators, together with these new explicit formulas.

\begin{rem}[Convergence of $u_K$ for higher regularity data]\label{rem:cv-highreg}
Let $u_K$ be the scheme defined by \eqref{eq:newScheme} with $n(k) \ge cK$, for some constant $c>0$ independent of $K$. By applying the same proof as in~\cite{ABCD24} with truncation parameter $\tilde K := cK$ we obtain their convergence result for this subclass $u_K$ of schemes, namely that for any $t\in \mathbb{R}$, $0\le r\le s$ and $u_0 \in H^s(\mathbb{T})$ with $s>1$, there exists $C>0$ such that
\[
\|u_K(t) - u(t)\|_{H^r} \le C(1+t)K^{-s+1+r}.
\]
We note that the regularity condition $s>1$ is natural, and is due to the structure of the explicit formula \eqref{eq:explicitForm}. Indeed, discretizing each exponential $e^{2itL}$ 
induces an $L^2$ error of order $K^{-s}$, yielding the rate $K^{-s+1}$ when applied $\tilde K$ time to control the error terms $(e^{2itL}S^{*})^k - (e^{2itL_{\tilde K}}S^{*})^k$ over ${0\le k <\tilde K}$.
As the error is measured either in $L^2$ or in a stronger $H^r$ norm, this explains why we need regularity at least $s>1$ to prove convergence rates.

We note that the schemes satisfying condition \eqref{eq:m-L2}, which preserve the $L^2$-norm on the discrete level, do not fit in the above case. Thereby, the only convergence result available for these schemes is the one presented here.
\end{rem}

\subsection{Outline}
Section~\ref{sec:schemes} is devoted to the introduction of our class of numerical schemes. Specifically, in Section~\ref{sec:defschemes} we define the schemes based on the explicit formulas presented in the introduction. We then discuss their preservation properties in Section~\ref{sec:struc}, and their computational cost in Section~\ref{sec:CPU}. In Section~\ref{sec:notation} we introduce the preliminary material needed for the proof of our main theorem, which is given in Section~\ref{sec:pf}.

\subsection*{Acknowledgements} The work of Y.A.B. is funded by the National Science Foundation through the award
DMS-2401858. The work of T.L.\ was supported by NSF CAREER grant DMS-1845037 and an AMS-Simons Travel Grant.  The authors thank  Louise Gassot and the Institut de recherche mathématique de Rennes (IRMAR) for support and hospitality at an early stage of this project, as well as Andreia Chapouto for discussions about the Talbot effect. 

\section{Schemes based on the explicit formulas}\label{sec:schemes}
\subsection{Defining the class of schemes}\label{sec:defschemes} 
The general form for the scheme $u_K$ is given in Fourier space as follows.
Let $K\in \mathbb{N}$ be the number of Fourier frequencies used in the discretization.
For each $K$, choose a sequence of truncation parameters $n(k) = n_K(k) \in \mathbb{N}$, for $k\in\mathbb{N}$, and set

\begin{equation}\label{eq:newScheme}
\widehat{u_{K}}(t,k) = \langle u^k, 1\rangle \quad\text{for}\quad 0\le k < K ,
\end{equation}
where
$u^0 = \Pi_{n(0)} u_0$, and for $k \ge 1$
\begin{equation}\label{eq:newScheme BO}
u^{k} = \left(e^{it(I + 2 L_{n(k)})}S^{*}\right)u^{k-1} \ \text{with}  \quad L_{j} = -i\partial_x - \Pi_{j}u_0\Pi_{j}, \quad j\in\mathbb{N}
\end{equation}
in the case of \eqref{eq:BO}, while for \eqref{eq:CS} 
\begin{equation}\label{eq:newScheme CS}
u^{k}  = \left(e^{-it(I + 2L_{n(k)})}S^{*}\right)u^{k-1}\  \text{with} \quad L_{j} = -i\partial_x \pm \Pi_{j}u_0\Pi_{j}\ol u_0 \Pi_{j}, \quad j\in\mathbb{N}.
\end{equation}
In the case of~\eqref{eq:CS}, by setting $\widehat{u_K}(t,k) = 0$ for $k<0$ or $k \ge K$, the formula~\eqref{eq:newScheme} defines an element of $L^2_+$, yielding our approximation of $u(t)$.  For~\eqref{eq:BO}, in order to ensure that $u_K$ is real-valued we set
\begin{equation*}
\widehat{u_{K}}(t,k) = \overline{\widehat{u_K}(t,-k)} \quad\text{for}\quad {-K} < k <0, 
\end{equation*}
and then take $\widehat{u_K}(t,k) = 0$ for $|k|\ge K$. 
\begin{rem}
The scheme \eqref{eq:newScheme} is independent of the values of $n(k)$ for $k \ge K$, as the iterates $u^k$ are only computed for $0\le k <K$. Nevertheless, we define the scheme with the full sequence $(n(k))_{k\ge 0}$ as it allows us to identify those which have better structure preserving properties, as further detailed below.
\end{rem}
\begin{rem}
The matrix exponentials appearing in \eqref{eq:newScheme BO} and \eqref{eq:newScheme CS} are well defined for $u_0\in L^{2}$ and $L^2_{+}$ respectively. Indeed, we show in Section \ref{sec:adj-equiv} that for any $j \in \mathbb{N}$ the Lax operators $L_j$ appearing in \eqref{eq:newScheme BO} and \eqref{eq:newScheme CS} are semi-bounded self-adjoint operators on $H^1_{+}$ and hence generate a strongly continuous one-parameter unitary group $t \to e^{itL_{j}}$ on $L^2_{+}$.
\end{rem}

\subsection{Structure-preserving properties and examples}\label{sec:struc}
Before giving examples of schemes belonging to this class \eqref{eq:newScheme}, we discuss some preservation properties which the class inherits.
By considering the coefficient $\widehat{u_{K}}(t,0)$, we have that the schemes \eqref{eq:newScheme} preserve the average of $u_0$.  This is a conserved quantity for both~\eqref{eq:BO} and~\eqref{eq:CS}; in the case of \eqref{eq:BO} this corresponds to the  mass.
\begin{prop}\label{prop:average}
Let $n(0)\geq 1$.  Then for all $t\in \mathbb{R}$ the scheme $u_K$ in \eqref{eq:newScheme} satisfies
\[\int_{\mathbb{T}} u_K(t,x)\,dx = \int_{\mathbb{T}} u_0(x)\,dx.\]
\end{prop}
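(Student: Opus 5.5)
The plan is a direct computation with the Fourier coefficient at zero frequency, using the normalization $\|f\|_{L^2}^2 = \frac{1}{2\pi}\int_{-\pi}^\pi |f|^2\,dx$ fixed in the introduction.

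First, I would note that $\frac{1}{2\pi}\int_{\mathbb{T}} f\,dx = \widehat{f}(0) = \langle f, 1\rangle$ for any $f \in L^2$. With this, the claim is equivalent to $\widehat{u_K}(t,0) = \widehat{u_0}(0)$.

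By the definition \eqref{eq:newScheme}, the case $k=0$ gives
\[
\widehat{u_K}(t,0) = \langle u^0, 1\rangle = \langle \Pi_{n(0)} u_0, 1\rangle = \widehat{\Pi_{n(0)} u_0}(0) = \mathbbm{1}_{0\le 0< n(0)}\,\widehat{u_0}(0).
\]
Here $u^0$ involves no exponential, so the time variable $t$ never enters. The hypothesis $n(0)\ge 1$ ensures that the indicator equals $1$, and therefore $\widehat{u_K}(t,0)=\widehat{u_0}(0)$ for every $t$.

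It remains to handle the two equations.
\begin{itemize}
\item For \eqref{eq:CS}, $u_K$ is defined directly from these coefficients, so the zero mode of $u_K$ is exactly $\widehat{u_0}(0)$.
\item For \eqref{eq:BO}, the symmetrization $\widehat{u_K}(t,k)=\overline{\widehat{u_K}(t,-k)}$ is imposed only for $-K<k<0$, so it leaves $k=0$ untouched. The coefficient $\widehat{u_0}(0)$ is real since $u_0$ is real-valued, which is consistent with $u_K$ being real.
\end{itemize}
Multiplying by $2\pi$ then gives the stated identity of integrals.

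I do not expect a real obstacle. The only points needing care are checking that $K\ge1$, so that the index $k=0$ actually lies in the range $0\le k<K$, and tracking the normalization convention.
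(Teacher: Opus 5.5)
Your proposal is correct and follows the paper's own (one-line) argument. The paper also just reads off the zero Fourier coefficient, $\widehat{u_K}(t,0)=\langle \Pi_{n(0)}u_0,1\rangle=\widehat{u_0}(0)$, using $n(0)\ge 1$; your extra checks on the \eqref{eq:BO} symmetrization and the normalization are accurate.
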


The $L^2$ norm is another conserved quantity that is common to both systems.
A second important feature of the above schemes \eqref{eq:newScheme} is that they have $L^2$ norm bounded by that of the initial data, as expressed in the following proposition.

\begin{prop}\label{prop:L2 consv}
Given $t \in \mathbb{R}$ we have 
\begin{equation}
\|\Pi u_{K}(t)\|_{L^2} \le  \| \Pi_{n(0)} u_0\|_{L^2}.
\label{L2 consv}
\end{equation}
In particular, this implies that 
\begin{equation}\label{eq:l2-est}
\| u_K(t)\|_{L^2} \le \| u_0\|_{L^2}.
\end{equation}
\end{prop}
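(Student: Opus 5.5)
The plan is to bound the Fourier coefficients of $\Pi u_K(t)$ by the norms of the iterates $u^k$, and to show that the norms $\|u^k\|_{L^2}$ telescope. Since $\Pi u_K(t)$ has Fourier coefficients $\widehat{u_K}(t,k)=\langle u^k,1\rangle$ for $0\le k<K$ and vanishes at frequencies $k\ge K$, Parseval gives
\[
\|\Pi u_K(t)\|_{L^2}^2=\sum_{k=0}^{K-1}|\langle u^k,1\rangle|^2 .
\]
The goal is then to show that $\sum_{k\ge 0}|\langle u^k,1\rangle|^2\le \|u^0\|_{L^2}^2=\|\Pi_{n(0)}u_0\|_{L^2}^2$.

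The key identity concerns the shift. For any $f\in L^2_+$ we have $f=\langle f,1\rangle + e^{ix}S^*f$, and the two summands are orthogonal. Hence
\[
\|S^*f\|_{L^2}^2=\|f\|_{L^2}^2-|\langle f,1\rangle|^2 .
\]
Next, I will invoke the fact, stated in the remark following the definition of the scheme and proved in Section~\ref{sec:adj-equiv}, that each $L_j$ is self-adjoint on $H^1_+$. Consequently $e^{\pm it(I+2L_j)}$ is unitary on $L^2_+$. Applying this to the recursion $u^k=e^{\pm it(I+2L_{n(k)})}S^*u^{k-1}$ gives
\[
\|u^k\|^2=\|S^*u^{k-1}\|^2=\|u^{k-1}\|^2-|\langle u^{k-1},1\rangle|^2 .
\]
Summing from $k=1$ to $K$ telescopes to
\[
\sum_{k=0}^{K-1}|\langle u^k,1\rangle|^2=\|u^0\|^2-\|u^K\|^2\le \|\Pi_{n(0)}u_0\|^2 ,
\]
which proves \eqref{L2 consv}. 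Note that $u^K$ is defined whatever the value of $n(K)$, since the whole sequence $(n(k))_{k\ge0}$ is given.

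For \eqref{eq:l2-est}, I split into the two equations. In the \eqref{eq:CS} case, $u_K\in L^2_+$, so $u_K=\Pi u_K$, and $\|\Pi_{n(0)}u_0\|\le\|u_0\|$ finishes the argument. In the \eqref{eq:BO} case, $u_K$ is real with $\widehat{u_K}(-k)=\overline{\widehat{u_K}(k)}$, so
\[
\|u_K\|^2=2\|\Pi u_K\|^2-|\widehat{u_K}(0)|^2 .
\]
Likewise, since $u_0$ is real,
\[
\|u_0\|^2=2\|\Pi u_0\|^2-|\widehat{u_0}(0)|^2 .
\]
This comparison is the one delicate point, because the zero mode is counted only once. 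To handle it, I will use the telescoped bound in the sharper form
\[
\sum_{k=1}^{K-1}|\langle u^k,1\rangle|^2\le \|u^0\|^2-|\langle u^0,1\rangle|^2 .
\]
We have $\langle u^0,1\rangle=\widehat{u_0}(0)$ if $n(0)\ge1$, and $\langle u^0,1\rangle=0$ if $n(0)=0$; in the latter case $u^0=0$ and hence $u_K=0$, so the claim is trivial. Assuming $n(0)\ge1$, this gives
\[
\|u_K\|^2=|\widehat{u_0}(0)|^2+2\sum_{k\ge1}|\langle u^k,1\rangle|^2\le |\widehat{u_0}(0)|^2+2\Bigl(\|\Pi_{n(0)}u_0\|^2-|\widehat{u_0}(0)|^2\Bigr)\le 2\|\Pi u_0\|^2-|\widehat{u_0}(0)|^2=\|u_0\|^2 .
\]

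The main obstacle is only this bookkeeping of the zero mode in the real case; the rest follows directly from unitarity together with the shift identity.
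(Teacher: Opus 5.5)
Your proof is correct and follows the paper's argument. Both use unitarity of $e^{\pm it(I+2L_{n(k)})}$ together with $\|S^*f\|^2=\|f\|^2-|\langle f,1\rangle|^2$ to telescope to \eqref{L2 consv}, and both obtain the real-valued case from $\|u_K\|^2=2\|\Pi u_K\|^2-|\widehat{u_K}(0)|^2$ and conservation of the zero mode. Your separate handling of the degenerate case $n(0)=0$ is a small improvement, since the paper tacitly relies on Proposition~\ref{prop:average}, which assumes $n(0)\ge 1$.
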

\begin{proof}
Fix the initial data $u_0\in L^2$ or $u_0 \in L^2_{+}$, and let $u_K$ denote the numerical scheme constructed in \eqref{eq:newScheme} from the recursive sequence $u^k$ defined in \eqref{eq:newScheme BO} for \eqref{eq:BO} and in \eqref{eq:newScheme CS} for \eqref{eq:CS}.

As each $L_{n(k)}$ is self-adjoint (see Propositions \ref{prop:lax} and \ref{prop:lax'} below for details), it generates a one-parameter unitary group $t\mapsto e^{itL_{n(k)}}$.  Consequently, for $k\geq 1$ we have
\begin{align*}
\| u^k \|_{L^2}^2 
= \| e^{\pm it(I+2L_{n(k)})} S^* u^{k-1} \|_{L^2}^2 
= \| S^* u^{k-1} \|_{L^2}^2 
= \| u^{k-1} \|_{L^2}^2 - \big| \langle u^{k-1}, 1 \rangle \big|^2 .
\end{align*}
By induction, we conclude
\begin{equation}\label{eq:ineq-L2}
\begin{aligned}
0\le \| u^{K} \|_{L^2}^2 
= \| u^{0} \|_{L^2}^2 - \sum_{k=0}^{K-1} \big| \langle u^{k}, 1 \rangle \big|^2 
= \| \Pi_{n(0)}u_0 \|_{L^2}^2 - \| \Pi u_K(t) \|_{L^2}^2.
\end{aligned}
\end{equation}
In particular, using Proposition \ref{prop:average} we have
\begin{equation}\label{eq:l2bd}
\| u_K(t)\|_{L^2}^2 = \| \Pi u_K(t)\|_{L^2}^2 + \big\| \overline{\Pi u_K(t)}\big\|_{L^2}^2 - \langle u_K(t), 1\rangle^2 
\le 2\| \Pi_{n(0)}u_0\|_{L^2}^2 - \langle u_0, 1\rangle^2 \le \| u_0\|_{L^2}^2.
\end{equation}
\qedhere
\end{proof}
The $L^2$-inequality~\eqref{L2 consv} is a vital step necessary for going from weak to strong $L^2$~convergence, and is hence essential for the convergence analysis presented in Section \ref{sec:pf}.
\newline

We now give examples of various schemes belonging to the above class \eqref{eq:newScheme}.

\begin{ex}[The linear case]
The simplest example consists in taking $n(0)=K$ and $n(k) = 0$, for  $k\ge 1$. The scheme thereby obtained is given by
\begin{equation}\label{eq:linscheme}
\widehat{u_{K}}(t,k) = \langle (e^{\alpha it(I-2i\partial_x)}S^*)^k\Pi_{K}u_0, 1\rangle, \quad 0\le k < K,
\end{equation}
with $\alpha = 1$ for \eqref{eq:BO} and $\alpha = -1$ for \eqref{eq:CS}.
The above scheme approximates the linearized equation
$
\partial_t u = \alpha \partial_x |\partial_x| u.
$
Indeed, it is straightforward to verify that for $K=\infty$ the above formula is equivalent to the linear flow $e^{\alpha t\partial_x|\partial_x|} u_0$.
\end{ex}

\begin{ex}[The scheme of \cite{ABCD24}] By taking $n(k)= K$, for each $k\ge 0$, we recover the scheme recently obtained by Alama Bronsard--Chen--Dolbeault \cite{ABCD24}.
\end{ex}
Next, by taking a more subtle approximation of the Lax operator, and hence of $n(k)$, we can obtain schemes which preserve the $L^2$-norm of the truncated initial data.
\begin{ex}[Schemes preserving the discrete $L^2$-norm]
By taking 
\begin{equation}\label{eq:m-L2}
n(k) \le K-k \quad \text{for} \quad 0\le k< K
\end{equation}
we obtain new schemes which {\it exactly preserve} the $L^2$-norm on the discrete level. This is expressed in the following proposition. 
\end{ex}
\begin{prop}\label{prop:l2norm-pres}
Let $t\in\mathbb{R}$, and let $u_K$ be any scheme defined by \eqref{eq:newScheme} satisfying assumption~\eqref{eq:m-L2}. This scheme satisfies
\[
\|\Pi u_{K}(t)\|_{L^2} =  \| \Pi_{n(0)} u_0\|_{L^2}
\]
\end{prop}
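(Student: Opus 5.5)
From the identity \eqref{eq:ineq-L2} in the proof of Proposition~\ref{prop:L2 consv}, we have
\[
\|\Pi_{n(0)}u_0\|_{L^2}^2 - \|\Pi u_K(t)\|_{L^2}^2 = \|u^K\|_{L^2}^2 .
\]
So it suffices to show that $u^K = 0$ under assumption~\eqref{eq:m-L2}. Our plan is to track the frequency support of the iterates $u^k$. We will prove by induction that $u^k \in \operatorname{Ran}\Pi_{n(k)}$, or at least $u^k\in\operatorname{Ran}\Pi_{K-k}$, for $0\le k\le K$. The conclusion then follows, since $\operatorname{Ran}\Pi_0=\{0\}$.

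The key structural observation is that for each $j\in\mathbb{N}$ the truncated Lax operator $L_j$ leaves the finite-dimensional space $E_j := \operatorname{Ran}\Pi_j$ invariant. The term $-i\partial_x$ is diagonal in Fourier space, hence preserves $E_j$. The perturbation $\Pi_j u_0\Pi_j$ (respectively $\Pi_j u_0 \Pi_j \overline{u_0}\Pi_j$) has range contained in $E_j$. Moreover, $L_j$ maps $E_j^\perp\cap H^1_+$ into itself, because $\Pi_j$ annihilates $E_j^\perp$ and $-i\partial_x$ is diagonal. Thus $L_j$ is reduced by the orthogonal decomposition $L^2_+=E_j\oplus E_j^\perp$. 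Since $L_j$ is self-adjoint (Propositions~\ref{prop:lax} and~\ref{prop:lax'}), it commutes with the projection $\Pi_j$, and therefore $e^{\pm it(I+2L_j)}$ maps $E_j$ into $E_j$. Concretely, on $E_j$ the group is just the matrix exponential of a Hermitian $j\times j$ matrix.

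We next observe that $S^*$ maps $E_m$ into $E_{m-1}$ for $m\ge1$, and maps $E_0=\{0\}$ to $\{0\}$. Now we run the induction. For the base case, $u^0=\Pi_{n(0)}u_0\in E_{n(0)}\subset E_K$ by~\eqref{eq:m-L2}. For the inductive step, suppose $u^{k-1}\in E_{K-k+1}$. Then $S^*u^{k-1}\in E_{K-k}$. Since $n(k)\le K-k$, we have $\Pi_{n(k)}\Pi_{K-k}=\Pi_{n(k)}$. The operator $L_{n(k)}$ acts on $E_{K-k}$ as $-i\partial_x$ on the part orthogonal to $E_{n(k)}$, and as the truncated operator on $E_{n(k)}$; both pieces preserve frequency support below $K-k$. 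Hence $E_{K-k}$ is invariant under $e^{\pm it(I+2L_{n(k)})}$, and so $u^k\in E_{K-k}$. Taking $k=K$ gives $u^K\in E_0=\{0\}$.

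The only point requiring care is the invariance of $E_{K-k}$, as opposed to $E_{n(k)}$, under the unitary group. To handle it, we check directly that $E_m$ reduces $L_j$ whenever $j\le m$: the perturbation maps into $E_j\subset E_m$ and vanishes on $E_m^\perp\subset E_j^\perp$, while $-i\partial_x$ is diagonal. Since $E_m$ is finite-dimensional and contained in the domain $H^1_+$, reducing subspaces of a self-adjoint operator are invariant under its unitary group. This completes the argument.
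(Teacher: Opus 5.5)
Your proposal is correct and follows the paper's argument: equality holds in \eqref{eq:ineq-L2} because $u^K=0$, which comes from tracking the frequency support of the iterates as in Remark~\ref{rem:size-uk}. Your reducing-subspace argument makes rigorous the invariance of $\operatorname{Ran}\Pi_{K-k}$ under $e^{\pm it(I+2L_{n(k)})}$, which the paper only asserts informally. One small slip: at the last inductive step $k=K$ you invoke $n(K)\le 0$, which \eqref{eq:m-L2} does not give (it only constrains $0\le k<K$). This is harmless, since $S^*u^{K-1}\in\operatorname{Ran}\Pi_0=\{0\}$ already forces $u^K=0$.
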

\begin{proof}
The proof follows from that of Proposition \ref{prop:L2 consv}, by noticing that $u^K \equiv 0$ and hence equality holds in \eqref{eq:ineq-L2}.  Indeed, in frequency variables, each $u^k$ is of size $K-k$ (see Remark \ref{rem:size-uk}) and is supported between the frequencies $0$ and $K-k-1$, and so in particular $u^K$ must vanish everywhere. Hence, it follows that
\[
\| \Pi u_K(t)\|_{L^2}^2 = 2 \| \Pi_{n(0)}u_0\|_{L^2}^2 - \langle u_0, 1 \rangle^2 = \| \Pi_{n(0)} u_0\|_{L^2}^2.
\]
\end{proof}
\begin{rem}
We cannot expect equality in \eqref{eq:l2-est}, since the highest modes of $u_0$ have been truncated. Consequently, the second estimate in \eqref{eq:l2bd} is necessarily an inequality. The strongest result one can hope for is that the $L^2$-norm of the truncated initial data is conserved in time, which is exactly what Proposition \ref{prop:l2norm-pres} guarantees.
\end{rem}

\begin{rem}[The size of the iterates $u^k$]\label{rem:size-uk}
In frequency variables,
the iterates $u^k$ in the numerical schemes \eqref{eq:newScheme BO} and \eqref{eq:newScheme CS} are
 vectors
of size
\be\label{eq:max-size}
m_k := \max_{0\le \ell \le k}\{ n(\ell)-(k-\ell)\},
\ee
which is bounded for $0\le k <K$ by $M = \max_{0\le k < K}n(k)$. 
Indeed, $u^{0}$ is of size $n(0)$; after shifting, $S^{*}u^0$ is of size $n(0)-1$; and when applying $e^{itL_{n(1)}}$, $u^1$ is of size $\max\{ n(0)-1, n(1)\}$, since the diagonal part $-i\partial_x$ is not truncated by $\Pi_j$ in \eqref{eq:newScheme BO} and \eqref{eq:newScheme CS}. Thereby, \eqref{eq:max-size} follows by induction.
\end{rem}

\begin{rem}[Truncated Lax operators and perturbations of $-i\partial_x$]\label{rem:perturbation}
As discussed in Remark~\ref{rem:size-uk}, the scheme is implemented as a vector of size $M$, and the computation of $L_n$ is equal to that of $\Pi_M L_n \Pi_M$. Nevertheless, for the analysis we do not apply any truncation $\Pi_M$ on the first term $-i\partial_x$ in the definition of the truncated Lax operators \eqref{eq:newScheme BO} and \eqref{eq:newScheme CS}. Indeed, this is crucial in order to treat the Toeplitz term $\Pi_nu\Pi_n$ or $\Pi_{n}u\Pi_{n}\ol u \Pi_n $ as perturbations of $-i\partial_x$, and thereby to obtain the desired convergence result, see Section \ref{sec:adj-equiv}. For this same reason, our convergence proof does not apply to the cubic Szeg\H o equation: its Lax operator lacks the derivative term $-i\partial_x$, so the problem cannot be treated perturbatively.
\end{rem}

\subsection{Computational cost}\label{sec:CPU}
The leading cost comes from computing the matrix exponentials in \eqref{eq:newScheme BO} and \eqref{eq:newScheme CS}, approximating the flow of the Lax operator $e^{2itL}$.
Hence, the cost of the scheme \eqref{eq:newScheme} will necessarily depend on the choice of the truncation of the Lax operator, which is dictated by $n(k)$. 

If no structure is imposed, and the $n(k)$ are chosen arbitrarily, then the scheme is costly to implement and requires $O(K^4)$ operations. This is due to the fact that in order to compute the Fourier coefficients of the scheme \eqref{eq:newScheme}, we compute $K-1$ matrix exponentials of the form $e^{it(I + 2L_{n(k)})}$, which each cost $\mathcal O(K^3)$.

However, if structure is imposed, such as when taking $n(k) = K$, or the $L^2$-preserving scheme corresponding to $n(k) = K/2$ for $0\le k\le K/2$ and zero otherwise (which satisfies \eqref{eq:m-L2}),
then the matrix exponential  $e^{it(I + 2L_{n(k)})}$ can be computed once in $ \mathcal O(K^3)$.
Indeed, 
since the matrices are self-adjoint, they can be diagonalized as $P\Lambda P^T$, allowing us to compute $Pe^{it\Lambda}P^T$ in $\mathcal O(K^3)$ operations. Moreover, as the eigenvalues $\lambda_j$ of the diagonal matrix $\Lambda$ are real-valued, we have $|e^{it\lambda_j}| = 1$ for all $t$, ensuring the stability of the method over time.
Once the matrix exponentials are computed, we can evaluate all the vectors in $\mathcal O(K)$ matrix-vector multiplications, with a computational cost in $\mathcal O(K^3)$ once again.

We emphasize that what counts is the {\it balance} between the precision $\epsilon$ and the computational cost $\mathcal{C}$ of the method, and not solely one of these quantities. When considering the trade-off between $\epsilon$ and $\mathcal{C}$, the schemes based upon explicit formulas outperform those previously established in the literature, as discussed in \cite[Section 4.2]{ABCD24}.

\section{Fourier transforms, norms and resolvants}\label{sec:notation} 

Our convention for the inner product on $L^2$ is
\[
\langle f, g \rangle = \frac{1}{2\pi} \int_{-\pi}^{\pi} f\ol{g}\,dx.
\]
We then choose to define the Fourier transform as
\begin{equation*}
\wh{f}(k) = \langle f, e^{ikx} \rangle, \quad\text{so that}\quad f(x) = \sum_{k\in\Z} \wh{f}(k) e^{ikx}.
\end{equation*}
Plancherel's identity then reads
\begin{equation*}
\|f\|_{L^2}^2 = \sum_{k\in\Z} |\wh{f}(k)|^2.
\end{equation*}
We define the operator norm as follows
\[
\| A\|_{\op} := \| A\|_{L^2_{+} \to L^2_{+}} = \sup_{\substack{\| f\|_{L^2} \le 1 \\ f\in L^2_{+}}} \| Af\|_{L^2}.
\]

We now recall the truncated Lax operators for \eqref{eq:BO} and \eqref{eq:CS} on $L^2_+$, which we {\it formally} defined in \eqref{eq:newScheme BO} and \eqref{eq:newScheme CS} when writing the scheme.
For \eqref{eq:BO}, we let $u\in L^2$ be real-valued initial data,  and define
\begin{equation}\label{eq:BO-trunc}
L_n = -i\partial_x - \Pi_n u \Pi_n \quad\text{and}\quad L_\infty = L = -i\partial_x - \Pi u .
\end{equation}
For \eqref{eq:CS}, we let $u\in L^2_+$ and construct the truncated Lax operators
\begin{equation}\label{eq:CS-trunc}
L_n = -i\partial_x \mp \Pi_n u \Pi_n\ol{u} \Pi_n \quad\text{and}\quad L_\infty = L = -i\partial_x \mp u\Pi \ol{u} .
\end{equation}
In the next section, we will construct $L_n$ as a perturbation of the free operator $L_0 = -i\partial_x$ on $L^2_+$, with domain $D(L_0) = H^1_{+}$.  We denote the resolvent of the latter by
\begin{equation*}
R_0(\kappa) = (L_0 + \kappa)^{-1}, \quad \kappa \ge 1.
\end{equation*}
It will also be convenient to define norms adapted to this resolvent:
\begin{equation}\label{eq:Hskappa}
\|f\|_{H^s_\kappa}^2 = \sum_{k\in\Z} (|k|+\kappa)^{2s} |\wh{f}(k)|^2.
\end{equation}


\section{Proof of convergence}\label{sec:pf}

The goal of this section is to prove our main convergence result presented in Theorem \ref{thm:main}. The proof is given in Section \ref{sec:pfThms}, where we show how strong convergence in $L^2(\mathbb{T})$ follows from combining weak convergence and Proposition~\ref{prop:L2 consv}.
To prove weak convergence we rely on the result of Section~\ref{sec:cv-group}, which proves convergence of the unitary group $e^{itL_n}$ associated to the truncated Lax operator $L_n$, uniformly on bounded time intervals $[-T,T]$ and over compact subsets $\mathcal{F}$ of~$ L^2_{+}$. To prove the latter, we show in Section \ref{sec:norm-res-cv} norm resolvent convergence of $L_n$, which in turn uses the operator bounds and equivalence of norms of Sections \ref{sec:op-bds} and \ref{sec:adj-equiv}.

\subsection{Operator bounds}\label{sec:op-bds}

The following estimates will be used in the next sections to rigorously define the truncated Lax operators $L_n$ in \eqref{eq:BO-trunc} and \eqref{eq:CS-trunc}, construct their resolvents, and study their mapping properties.  Our first two lemmas show that $\Pi_n u \Pi_n$ and $\Pi_n u \Pi_n \overline{u} \Pi_n $ are relatively bounded perturbations of $L_0$.  In fact, these estimates are even more general, as they do not require the left-most truncation $\Pi_n$, and just consider $u\Pi_n u$ and $u\Pi_n \bar{u} \Pi_n$, see also  Remark~\ref{rem:pert}. 

We start with the case of the \eqref{eq:BO} equation.
\begin{lem}
For any $u\in L^2$, we have
\begin{align}
\| u\Pi_n R_0(\kappa) \|_{\op} &\le\sqrt3 \kappa^{-\frac12} \|u\|_{L^2},
\label{est 1}
\end{align}
uniformly for $1\leq n\leq \infty$ and $\kappa\geq 1$.
\end{lem}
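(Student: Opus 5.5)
The plan is to pass to frequency space, bound the composite operator by a Hilbert--Schmidt norm, and use the fact that $\Pi_n R_0(\kappa)$ is a Fourier multiplier supported on nonnegative frequencies. For $f\in L^2_+$ we have $\wh{R_0(\kappa) f}(k) = (k+\kappa)^{-1}\wh f(k)$ for $k\ge 0$. Hence $\Pi_n R_0(\kappa)$ is the multiplier $m_n(k)=\mathbbm{1}_{0\le k<n}(k+\kappa)^{-1}$. Since $\Pi_n$ only removes modes, this multiplier is dominated pointwise by $(k+\kappa)^{-1}\mathbbm{1}_{k\ge0}$, uniformly in $1\le n\le\infty$.

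First compute the Fourier coefficients of $g := u\,\Pi_n R_0(\kappa) f$:
\[
\wh g(j) = \sum_{k\ge 0} \wh u(j-k)\, m_n(k)\, \wh f(k).
\]
Cauchy--Schwarz in $k$ then gives
\[
|\wh g(j)|^2 \le \Big(\sum_{k\ge0} |\wh u(j-k)|^2 (k+\kappa)^{-2}\Big)\,\|f\|_{L^2}^2 .
\]
Summing over $j\in\Z$ and exchanging sums (Fubini), the $j$-sum of $|\wh u(j-k)|^2$ equals $\|u\|_{L^2}^2$ for each fixed $k$. This yields
\[
\|g\|_{L^2}^2 \le \|u\|_{L^2}^2\,\|f\|_{L^2}^2 \sum_{k\ge 0}(k+\kappa)^{-2}.
\]
This is just the Hilbert--Schmidt bound for the operator with kernel $\wh u(j-k)m_n(k)$.

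The remaining step is the elementary bound $\sum_{k\ge0}(k+\kappa)^{-2}\le \kappa^{-2}+\int_0^\infty (x+\kappa)^{-2}\,dx = \kappa^{-2}+\kappa^{-1}\le 2\kappa^{-1}$ for $\kappa\ge1$. This gives the constant $\sqrt2\,\kappa^{-1/2}$, which is better than the claimed $\sqrt3$. So the stated bound holds with room to spare, uniformly in $n$ and $\kappa\ge1$.

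One technical point needs care: $u\in L^2$ times an $L^2$ function need not be in $L^2$ a priori. I will handle this by defining $g$ through its Fourier series, i.e.\ as the convolution $\wh u * (m_n\wh f)$. Here $m_n\wh f\in\ell^1$ because $m_n\in\ell^2$ and $\wh f\in\ell^2$, so the sum converges absolutely and agrees with the pointwise product $u\cdot R_0\Pi_n f$. That product is a product of an $L^2$ function and a continuous function (the latter has absolutely summable Fourier coefficients). I do not expect any genuine obstacle; the only mildly delicate steps are this justification of the product and the exchange of summations.
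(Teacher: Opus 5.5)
Your argument is correct, and it reaches the same underlying quantity as the paper by a somewhat different route.

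\textbf{The paper's route.} It works in physical space. With $h=\Pi_nR_0(\kappa)f$, it uses H\"older, $\|uh\|_{L^2}\le\|u\|_{L^2}\|h\|_{L^\infty}$. It then bounds $\|h\|_{L^\infty}\le\|\wh h\|_{\ell^1}\le\sqrt3\,\kappa^{-1/2}\|h\|_{H^1_\kappa}\le\sqrt3\,\kappa^{-1/2}\|f\|_{L^2}$. The middle inequality is Cauchy--Schwarz against $(|k|+\kappa)^{-1}$, summed over all $k\in\Z$.

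\textbf{Your route.} You compute the Hilbert--Schmidt norm of the Fourier-side kernel $\wh u(j-k)m_n(k)$, which is exactly $\|u\|_{L^2}\|m_n\|_{\ell^2}$.

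\textbf{Comparison.} Both proofs end at the bound $\|u\|_{L^2}\|m_n\|_{\ell^2}\|f\|_{L^2}$. Your better constant $\sqrt2$ comes only from summing over $k\ge0$. The paper could have done the same, since $f\in L^2_+$.

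Your ``technical point'' already contains the paper's entire proof. Once you note $\|m_n\wh f\|_{\ell^1}\le\|m_n\|_{\ell^2}\|f\|_{L^2}$, so that $h$ is bounded, H\"older finishes at once. Neither the convolution formula nor the Fubini exchange is then needed.

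What your route buys is the extra information that $u\Pi_nR_0(\kappa)$ is Hilbert--Schmidt, hence compact. The paper's route is shorter. Its reduction to an $\ell^1$-Fourier ($L^\infty$) bound on the inner factor is also the template it reuses in Lemma~\ref{lem:CS}.
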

\begin{proof}
For any $s>\frac12$, the space $H^s$ embeds into $L^\infty$.  More precisely, for $g \in H^s_\kappa$, using Cauchy--Schwarz in Fourier variables we find
\begin{equation}
\|g\|_{L^\infty} \leq \|\wh{g}\|_{\ell^1} \leq \sqrt{\frac{2s +1}{2s-1}} \kappa^{\frac12 - s} \|g\|_{H^s_\kappa}, \quad \kappa\geq 1.
\label{est 1p1}
\end{equation}
Taking $s=1$ in the above and recalling the $H^s_{\kappa}$-norm \eqref{eq:Hskappa}, we see that for all $1\leq n\leq \infty$ and $\kappa\geq 1$,
\begin{equation*}
\|u\Pi_n R_0f \|_{L^2} 
\leq \|u\|_{L^2} \|\Pi_n R_0f \|_{L^\infty}
\le\sqrt3 \kappa^{-\frac12} \|u\|_{L^2} \|R_0f\|_{H^1_\kappa}
= \sqrt3\kappa^{-\frac12} \|u\|_{L^2} \|f\|_{L^2} .
\end{equation*}
This proves~\eqref{est 1}.
\end{proof}


We now show the analogue of the above lemma for the \eqref{eq:CS} equation. 
Note that the Lax operator now contains two occurrences of $u$, so the elementary bound \eqref{est 1p1} is no longer sufficient to control both factors. Instead, we leverage the Fourier support of $u\in L^2_+$.  Moreover, the operator bound in Lemma \ref{lem:CS} does not provide an explicit decay rate in $\kappa$; therefore, a separate argument is required in the case of \eqref{eq:CS} to establish decay as $\kappa \to \infty$.

\begin{lem}\label{lem:CS}
For any $u\in L^2_+$, $1\leq n\leq \infty$, and $\kappa\geq 1$, we have
\begin{align}
\| u \Pi_n \ol{u}\Pi_n R_0(\kappa)\|_{\op} &\le 2 \| u\|_{L^2}^2 .
\label{est 4}
\end{align}
Moreover, for each $u\in L^2_+$,
the left-hand side tends to 0 as $\kappa \to \infty$, uniformly for $1\le n\le\infty$.
\end{lem}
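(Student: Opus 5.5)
Set $g = R_0(\kappa) f$ for $f\in L^2_+$; then $g\in H^1_+$ and $\|g\|_{H^1_\kappa} = \|f\|_{L^2}$. We must bound $\|u\,\Pi_n(\ol{u}\,\Pi_n g)\|_{L^2}$. The key observation is that the intermediate function $h = \Pi_n(\ol{u}\,\Pi_n g)$ has Fourier coefficients
\[
\wh{h}(m) = \mathbbm{1}_{0\le m<n}\sum_{k\ge m} \wh{\Pi_n g}(k)\,\ol{\wh{u}(k-m)} .
\]
Because $u\in L^2_+$, only $k\ge m$ contributes. Hence $|\wh{h}(m)| \le \|u\|_{L^2}\big(\sum_{k\ge m}|\wh g(k)|^2\big)^{1/2}$ by Cauchy--Schwarz. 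This tail is controlled through the weight $(k+\kappa)^2\ge (m+\kappa)^2$:
\[
|\wh h(m)|\le \|u\|_{L^2}(m+\kappa)^{-1}\|g\|_{H^1_\kappa}.
\]
Summing gives $\|\wh h\|_{\ell^1}\le \|u\|_{L^2}\|g\|_{H^1_\kappa}\sum_{m\ge0}(m+\kappa)^{-1}$, which diverges logarithmically. So I would not go through $\ell^1$ directly.

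Instead, estimate $\|u h\|_{L^2}\le \|u\|_{L^2}\|h\|_{L^\infty}$ more carefully. Writing $h(x) = \sum_{0\le m<n} e^{imx}\sum_{k\ge m}\wh g(k)\ol{\wh u(k-m)}$ and interchanging sums gives
\[
|h(x)|\le \sum_{k\ge0}|\wh g(k)|\sum_{j=0}^{k}|\wh u(j)| .
\]
Cauchy--Schwarz in $j$ yields $\sum_{j\le k}|\wh u(j)|\le \sqrt{k+1}\,\|u\|_{L^2}$. Then Cauchy--Schwarz in $k$ gives
\[
\sum_k |\wh g(k)|\sqrt{k+1} \le \|g\|_{H^1_\kappa}\Big(\sum_k \frac{k+1}{(k+\kappa)^2}\Big)^{1/2},
\]
which is again log-divergent. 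The fix is to split the $j$-sum using the weight on $\wh g$ more cleverly: write $|\wh g(k)|(k+\kappa) = a_k\in\ell^2$ and bound the bilinear form
\[
\sum_{m\ge0}\Big|\sum_{k\ge m} \frac{a_k}{k+\kappa}\,\ol{\wh u(k-m)}\Big|
\]
by a Hilbert/Hardy-type inequality. I would rather avoid $L^\infty$ and estimate $\|u h\|_{L^2}$ by duality. Pair with $\phi\in L^2$; then $\langle u h,\phi\rangle$ becomes a trilinear sum over frequencies $k\ge m\ge0$, $j=k-m\ge 0$, with $u$'s frequencies $j,\ell\ge0$. Using the nonnegativity of all frequencies of $u$ and $g$, the constraint forces the output frequency of $\phi$ to satisfy $p = \ell+m \le \ell + k$. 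Applying Schur's test with weights $(k+\kappa)^{-1}$ and Hardy's inequality $\sum_m\big(\frac{1}{m+\kappa}\sum_{k\le \dots}\big)^2$ should produce the constant $2$, the $2$ presumably coming from Hardy's constant. This Hardy/Schur step, where $\Pi_+$ support is essential (it genuinely fails for general $u\in L^2$), is the main obstacle. I would first try writing $\|u h\|_{L^2}\le \|u\|_{L^2}\|\wh h\|_{\ell^1}$ with the refined bound $|\wh h(m)|\le \|u\|_{L^2}\big(\sum_{k\ge m}|\wh g(k)|^2\big)^{1/2}$ and then $\sum_m\big(\sum_{k\ge m}|\wh g(k)|^2\big)^{1/2}$. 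Since $\sum_{k\ge m}|\wh g(k)|^2 = \sum_{k\ge m}\frac{a_k^2}{(k+\kappa)^2}$, a dyadic decomposition in $m$ gives
\[
\sum_m \Big(\sum_{k\ge m}|\wh g(k)|^2\Big)^{1/2}\lesssim \sum_{k}|\wh g(k)|\,(k+1)\lesssim\|g\|_{H^1}.
\]
However, this uses the unweighted $H^1$ norm, and $\|g\|_{H^1}$ is $\le\|g\|_{H^1_\kappa}$, consistent with a $\kappa$-independent bound. Indeed, Cauchy--Schwarz in the form $\sum_m (T_m)^{1/2}\le \big(\sum_m (m+1)^{-2}\cdot\ldots\big)$ might still lose a log. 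Careful bookkeeping, namely $\sum_m T_m^{1/2}\le \big(\sum_m (m+1)^{1+\epsilon}T_m\big)^{1/2}(\ldots)^{1/2}$, is the delicate part.

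For the decay statement, approximate $u$ in $L^2_+$ by a trigonometric polynomial $v$ with $\|u-v\|_{L^2}<\varepsilon$. Expand
\[
u\Pi_n\ol u\Pi_n = (u-v)\Pi_n\ol u\Pi_n + v\Pi_n\ol{(u-v)}\Pi_n + v\Pi_n \ol v\Pi_n .
\]
The first two terms are bounded via the bilinear version of \eqref{est 4} (the same proof with two different functions) by $2\varepsilon(\|u\|_{L^2}+\|v\|_{L^2})$. For the third, $v,\ol v\in L^\infty$, so its norm is at most $\|v\|_{L^\infty}^2\|R_0(\kappa)\|_{\op}\le \|v\|_{L^\infty}^2\kappa^{-1}\to0$, uniformly in $n$. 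Taking $\limsup_{\kappa\to\infty}$ and then $\varepsilon\to0$ concludes.
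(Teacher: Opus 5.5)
There is a genuine gap in your proof of \eqref{est 4}. You sketch several routes but carry none of them through, and the one you finally commit to cannot work. The claim $\sum_k|\wh g(k)|(k+1)\lesssim\|g\|_{H^1}$ is false, since it asks for an $\ell^1$ bound on a sequence known only to be in $\ell^2$. Worse, the quantity $\sum_m\big(\sum_{k\ge m}|\wh g(k)|^2\big)^{1/2}$ is not controlled by $\|g\|_{H^1_\kappa}$ at all. Take $\kappa=1$ and $(k+1)\wh g(k)=(k+2)^{-1/2}\log^{-1}(k+2)\in\ell^2$. Then the tails satisfy $\sum_{k\ge m}|\wh g(k)|^2\gtrsim (m\log m)^{-2}$, so the sum over $m$ diverges like $\sum_m (m\log m)^{-1}$. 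The per-mode bound $|\wh h(m)|\le\|u\|_{L^2}\big(\sum_{k\ge m}|\wh g(k)|^2\big)^{1/2}$ effectively lets $u$ be chosen adversarially for each $m$ separately, and no bookkeeping recovers that loss. The duality/Schur alternative is only named, not executed. Your diagnosis ``do not go through $\ell^1$'' is also misplaced: the paper bounds $\|h\|_{L^\infty}$ precisely by $\|\wh h\|_{\ell^1}$. What matters is how that $\ell^1$ sum is estimated.

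The missing step is one line away from a point you actually reached. After interchanging sums you had $\|h\|_{L^\infty}\le\sum_{k\ge0}|\wh g(k)|\sum_{j=0}^{k}|\wh u(j)|$, and the restriction $j\le k$ is exactly where $u\in L^2_+$ enters. The log loss comes from applying Cauchy--Schwarz in $j$. Apply it in $k$ instead, pairing $(k+\kappa)|\wh g(k)|$ with the averages of $|\wh u|$:
\[
\sum_{k\ge0}(k+\kappa)|\wh g(k)|\cdot\frac{1}{k+\kappa}\sum_{j=0}^{k}|\wh u(j)|\le\|f\|_{L^2}\,\Big\|\frac{1}{k+1}\sum_{j=0}^{k}|\wh u(j)|\Big\|_{\ell^2_k}\le 2\|f\|_{L^2}\|u\|_{L^2}.
\]
This uses $\kappa\ge1$ and the discrete Hardy inequality with constant $2$. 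It is the paper's proof, and it is uniform in $n$.

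Once this is in place, your decay argument is sound and genuinely different from the paper's. The same computation gives the bilinear bound $\|w_1\Pi_n\ol{w_2}\Pi_nR_0(\kappa)\|_{\op}\le2\|w_1\|_{L^2}\|w_2\|_{L^2}$ whenever $w_2\in L^2_+$. Combined with $\|R_0(\kappa)\|_{\op}=\kappa^{-1}$ for the trigonometric-polynomial term, this yields the limit. The paper instead splits the Hardy average into frequencies $l\ge N$, which are small uniformly in $\kappa$, and $l<N$, which are of size at most $N\kappa^{-1}\|u\|_{L^2}$. As written, though, your decay step rests on \eqref{est 4}, which you had not yet proved.
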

\begin{proof}

First, we have
\begin{align*}
\| u \Pi_n \ol u \Pi_n R_0 f\|_{L^2} \le \| u\|_{L^2} \| \Pi_n \ol u \Pi_n R_0 f\|_{L^\infty}.
\end{align*}
We bound the $L^\infty$ norm by the $\ell^1$ norm of the Fourier coefficients:
\begin{align*}
\| \Pi_n \ol u \Pi_n R_0 f\|_{L^\infty} &\le \sum_{k=0}^{n-1} \Bigg|\sum_{l=0}^{n-1} \ol{\wh{u}(l-k)} \frac{\wh{f}(l)}{l+\kappa}\Bigg|\\
&\le  \sum_{l=0}^{n-1}  \frac{|\wh{f}(l)|}{l+\kappa} \sum_{k=0}^{l}|{\wh{u}(l-k)}|\\
&\le \| f\|_{L^2} \Bigg\| \frac{1}{l+\kappa}\sum_{m=0}^{l}|\wh u(m)|\Bigg\|_{\ell^2} ,
\end{align*}
where in the second line we use the fact that $u\in L^2_{+}$, and hence the terms are zero for $l< k$. As $\kappa \ge 1$ we apply Hardy's inequality to obtain the bound
\[
\Bigg\| \frac{1}{l+\kappa}\sum_{m=0}^{l}|\wh u(m)|\Bigg\|_{\ell^2} \le 
\Bigg\| \frac{1}{l+1}\sum_{m=0}^{l}|\wh u(m)|\Bigg\|_{\ell^2} \le 2 \| u\|_{L^2},
\]
which proves \eqref{est 4}.

Moreover, it follows from the above that
\[
\Bigg\| \frac{1}{l+\kappa}\sum_{m=0}^{l}|\wh u(m)|\Bigg\|_{\ell^2_{l\ge N}} \xrightarrow{N \rightarrow \infty} 0.
\]
So, for $N\geq 1$ fixed, we estimate the contribution of the low frequencies $l < N$:
\begin{align*}
\Bigg\| \frac{1}{l+\kappa}\sum_{m=0}^{l}|\wh u(m)|\Bigg\|_{\ell^2_{l< N}}^2 &\le \frac{1}{\kappa^2} \sum_{l=0}^{N-1} \left(\sum_{m=0}^l  |\wh u(m)| \right)^2\\
& \le \frac{1}{\kappa^2} \sum_{l=0}^{N-1} (l+1)\sum_{m=0}^l |\wh u(m)| ^2 \\
&\le \frac{N^2}{\kappa^2} \| u\|_{L^2}^2 \xrightarrow{\kappa \to \infty} 0.
\end{align*}
Therefore, the desired convergence follows from first choosing $N$ large and then sending $\kappa\to\infty$.
\end{proof}


The following lemma is used in Section \ref{sec:norm-res-cv} for obtaining resolvent convergence as $n\to \infty$.
\begin{lem}
We have
\begin{equation}\label{eq:decay}
\| (\Pi-\Pi_n)R_0\|_{\op} = \| R_0(\Pi-\Pi_n)\|_{\op} \le \frac{1}{n},
\end{equation}
uniformly for  $\kappa\geq 1$.
\end{lem}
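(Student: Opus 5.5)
All three operators $\Pi$, $\Pi_n$ and $R_0(\kappa)$ act on $L^2_+$ as Fourier multipliers, so the plan is to reduce everything to a supremum of a scalar symbol. By definition, $\Pi-\Pi_n$ restricted to $L^2_+$ is the multiplier $\mathbbm{1}_{k\ge n}$, because $\wh{(\Pi-\Pi_n)f}(k) = \mathbbm{1}_{k\ge 0}\wh f(k) - \mathbbm{1}_{0\le k<n}\wh f(k)$. Since $L_0=-i\partial_x$ has symbol $k$, the resolvent $R_0(\kappa)=(L_0+\kappa)^{-1}$ has symbol $(k+\kappa)^{-1}$ for $k\ge 0$. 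This symbol is well defined and positive because $\kappa\ge1$ and $k\ge0$ on $L^2_+$.

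First, multipliers commute, so $(\Pi-\Pi_n)R_0 = R_0(\Pi-\Pi_n)$ as operators on $L^2_+$, and the two norms are equal. Alternatively, both operators are self-adjoint with real symbols, so each is the adjoint of the other, and adjoints have equal operator norms.

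Second, by Plancherel's identity, the operator norm of a multiplier with bounded symbol $m(k)$ on $L^2_+$ is $\sup_{k\ge0}|m(k)|$. Here
\[
\sup_{k\ge 0}\frac{\mathbbm{1}_{k\ge n}}{k+\kappa} = \frac{1}{n+\kappa}\le \frac1n ,
\]
which holds uniformly in $\kappa\ge1$ (and in fact gives $\frac{1}{n+\kappa}$). For $n=\infty$ the operator is zero, so the bound is trivial.

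There is no genuine obstacle; the only point needing care is that we work on $L^2_+$, so only frequencies $k\ge 0$ enter and the symbol $(k+\kappa)^{-1}$ is positive and bounded.
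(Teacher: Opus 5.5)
Your proposal is correct and follows essentially the same route as the paper: both use that $R_0$ and $\Pi-\Pi_n$ commute as Fourier multipliers, then bound the symbol $\mathbbm{1}_{k\ge n}(k+\kappa)^{-1}$ via Plancherel. Your bound $\frac{1}{n+\kappa}$ is slightly sharper than the paper's $\frac1n$.
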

\begin{proof}
First, note that $R_0$ and $\Pi-\Pi_n$ commute as they are both Fourier multipliers. For $f\in L^2_{+} $, we have
\[
\| R_0(\Pi-\Pi_n)f\|_{L^2}^2 = \sum_{k\ge n}\frac{|\wh f(k)|^2}{(k + \kappa)^2} \le \frac{1}{n^2} \| f\|_{L^2}^2 .
\qedhere
\]
\end{proof}

\begin{rem}\label{rem:pert}
As $\| \Pi_n \|_{\op} \le 1$, the estimates \eqref{est 1} and \eqref{est 4} also hold when adding the truncation operator $\Pi_n$ in front and estimating the perturbative term $\Pi_nu\Pi_n R_0$ or $\Pi_n u\Pi_n \overline{u}R_0$. Nevertheless, we keep the above more general statement, given that in Section \ref{sec:norm-res-cv} we apply equation \eqref{eq:decay} followed by equations \eqref{est 1} and \eqref{est 4}, without truncation  $\Pi_n$ on the left side.
\end{rem}

\subsection{Self-adjoint operators and equivalence of norms}\label{sec:adj-equiv}

\begin{prop}[The truncated Lax operators for \eqref{eq:BO}]
\label{prop:lax}
Given $u\in L^2$ real-valued and $1 \le n \le \infty$, the operator
\begin{equation*}
L_n  = -i\partial_x - \Pi_n  u \Pi_n 
\end{equation*}
with domain $D(L_n) = H^1_+$ is self-adjoint and semi-bounded.  Moreover, there is a constant $\kappa_0 = \kappa_0(\|u\|_{L^2}) \geq 1$ such that for all $\kappa\geq\kappa_0$, the resolvent of $L_n$,
\[
R_n(\kappa;u) = (L_n + \kappa)^{-1}
\] 
exists, and maps $L^2_+$ into $H^{1}_+$. Moreover, the following estimates hold 
\begin{align}
\tfrac12\|f\|_{H^1_\kappa} &\le \| (L_n+\kappa)f \|_{L^2} \le \tfrac32 \|f\|_{H^1_\kappa} ,
\label{L} \\
\tfrac23\| f\|_{H^{-1}_\kappa} &\le\;\; \| R_n(\kappa)f \|_{L^2}\;\; \le2 \| f\|_{H^{-1}_\kappa}
\label{R}
\end{align}
uniformly for $1\leq n\leq \infty$ and $\kappa\geq\kappa_0$.
\end{prop}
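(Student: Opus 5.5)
The plan is to treat $V_n := \Pi_n u \Pi_n$ as an infinitesimally small relative perturbation of $L_0=-i\partial_x$ on $L^2_+$ and to invoke the Kato--Rellich theorem.

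\emph{Symmetry.} $L_0$ with domain $H^1_+$ is self-adjoint: it is the Fourier multiplier $k\mapsto k$ on $k\ge0$, so it is bounded below by $0$. Since $u$ is real-valued, $V_n$ is symmetric on $H^1_+$. Indeed, for $f,g\in H^1_+$ the products $u\,\Pi_n f\,\overline{\Pi_n g}$ are integrable, because $\Pi_n f\in L^\infty$ by \eqref{est 1p1}. Hence
\[
\langle \Pi_n u\Pi_n f,g\rangle=\langle u\Pi_n f,\Pi_n g\rangle=\langle \Pi_n f,u\Pi_n g\rangle=\langle f,\Pi_n u\Pi_n g\rangle .
\]

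\emph{Relative bound.} By \eqref{est 1} and Remark~\ref{rem:pert}, $\|V_nR_0(\kappa)\|_{\op}\le \sqrt3\,\kappa^{-1/2}\|u\|_{L^2}$. Writing $f=R_0(\kappa)(L_0+\kappa)f$, this gives
\[
\|V_nf\|_{L^2}\le \sqrt3\,\kappa^{-1/2}\|u\|_{L^2}\,\|(L_0+\kappa)f\|_{L^2}
\]
for all $f\in H^1_+$, uniformly in $n$. Since $\kappa$ is arbitrary, $V_n$ is $L_0$-bounded with relative bound $0$. Kato--Rellich then gives that $L_n=L_0-V_n$ is self-adjoint on $D(L_0)=H^1_+$ and semi-bounded. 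The lower bound can be made explicit: $|\langle V_nf,f\rangle|\le\|V_nf\|\|f\|$, which yields $L_n\ge -C\|u\|_{L^2}^2$.

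\emph{Choice of $\kappa_0$ and the estimate \eqref{L}.} Note that $\|(L_0+\kappa)f\|_{L^2}=\|f\|_{H^1_\kappa}$ on $L^2_+$, since $|k|+\kappa=k+\kappa$ for $k\ge0$. Choose $\kappa_0=\max\{1,12\|u\|_{L^2}^2\}$, so that $\sqrt3\,\kappa^{-1/2}\|u\|_{L^2}\le\frac12$ for all $\kappa\ge\kappa_0$. The triangle inequality applied to $(L_n+\kappa)f=(L_0+\kappa)f-V_nf$ then gives \eqref{L} directly.

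\emph{The resolvent.} For $\kappa\ge\kappa_0$ I would factor
\[
L_n+\kappa=\bigl(I-V_nR_0(\kappa)\bigr)(L_0+\kappa).
\]
The first factor is invertible on $L^2_+$ by a Neumann series, since its perturbation has operator norm at most $\frac12$. Therefore
\[
R_n(\kappa)=R_0(\kappa)\bigl(I-V_nR_0(\kappa)\bigr)^{-1}
\]
exists and maps $L^2_+$ into $H^1_+$. Alternatively, this follows from self-adjointness together with $-\kappa$ lying below the spectrum.

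\emph{The estimate \eqref{R}: the main obstacle.} Here $f$ is only in $L^2_+$, and \eqref{R} requires reading $H^{-1}_\kappa$ on $L^2_+$, which is harmless. I would argue by duality from \eqref{L}. Because $L_n$ is self-adjoint and $\kappa$ is real, $R_n(\kappa)$ is self-adjoint and bounded. Then
\[
\|R_nf\|_{L^2}=\sup_{\|g\|_{L^2}\le1}|\langle f,R_ng\rangle|\le \|f\|_{H^{-1}_\kappa}\sup_{\|g\|_{L^2}\le1}\|R_ng\|_{H^1_\kappa}\le 2\|f\|_{H^{-1}_\kappa},
\]
where the last step applies the left inequality of \eqref{L} to $R_ng$. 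For the lower bound,
\[
\|f\|_{H^{-1}_\kappa}=\sup_{\|h\|_{H^1_\kappa}\le1}|\langle f,h\rangle|,\qquad \langle f,h\rangle=\langle R_nf,(L_n+\kappa)h\rangle ,
\]
and this identity uses self-adjointness with $h\in H^1_+$. The right inequality of \eqref{L} then gives $\|f\|_{H^{-1}_\kappa}\le\frac32\|R_nf\|_{L^2}$, which is the lower bound in \eqref{R}.

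The care needed here is in justifying the duality pairing identity (the domain issue) and in keeping all constants uniform in $1\le n\le\infty$. This uniformity holds because every bound used depends on $n$ only through $\|\Pi_n\|_{\op}\le1$.
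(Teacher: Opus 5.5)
Your proof is correct and follows essentially the same route as the paper: the relative bound from \eqref{est 1} with $\kappa_0=\max\{1,12\|u\|_{L^2}^2\}$, then Kato--Rellich for self-adjointness on $H^1_+$, the triangle inequality for \eqref{L}, and duality for \eqref{R}; you also usefully make explicit the symmetry check, the Neumann-series formula $R_n=R_0(I-V_nR_0)^{-1}$, and the two duality computations that the paper only alludes to. The one thing to drop is your parenthetical ``explicit'' lower bound, for two reasons: $|\langle V_nf,f\rangle|\le\|V_nf\|\|f\|$ combined with the operator bound does not control the quadratic form, and the claim $L_n\ge -C\|u\|_{L^2}^2$ is actually false for small $u$ (for $u\equiv\epsilon$ the constant $1$ is an eigenvector of $L_n$ with eigenvalue $-\epsilon$); the correct statement, as in the paper, is $L_n\ge-\kappa_0$, which your Neumann-series argument already gives by placing $(-\infty,-\kappa_0]$ in the resolvent set of the self-adjoint operator $L_n$.
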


\begin{proof}  First, the linear operator $L_0 = -i\partial_x$ is self-adjoint with domain $H^1_{+}$. Hence, we consider the  symmetric operator $\Pi_n u \Pi_n $.
For $f\in H^1_+$, by applying estimate \eqref{est 1} we have
\begin{equation}
\| \Pi_nu\Pi_n f \|_{L^2}
\le \| \Pi_n\|_\op \| u\Pi_n R_0\|_\op \| (L_0+\kappa)f \|_{L^2} 
\le \sqrt3 \kappa^{-\frac12} \|u\|_{L^2} \| (L_0+\kappa)f \|_{L^2}.
\label{est 1p2}
\end{equation}
By taking $\kappa$ large, it follows from the above estimate that the operator $\Pi_nu\Pi_n$ is infinitesimally small with respect to the operator $L_0 = -i\partial_x$ on $L^2_+$ \cite[Chp.~X.2]{ReedSimonvol2}.  
In particular, by choosing $\kappa_0 \ge \max{\{12\| u\|_{L^2}^2,1\}}$ we have that
\begin{equation}\label{eq:BO-bdd}
\| \Pi_nu\Pi_n f \|_{L^2}
\leq \tfrac{1}{2} \| (L_0+\kappa)f\|_{L^2}, \quad \kappa \ge \kappa_0, \quad 1\leq n\leq \infty
\end{equation}
and hence $\Pi_nu\Pi_n$ is $L_0$-bounded.
Thus, by the Kato--Rellich theorem \cite{ReedSimonvol2}*{Th.~X.12}, we have that $L_n$ is a self-adjoint operator with domain $H^1_+$, and is semi-bounded from below by $-\kappa_0$. 
Finally, by again applying \eqref{eq:BO-bdd} we have
\begin{equation*}
\tfrac{1}{2} \| (L_0+\kappa)f\|_{L^2}
\leq \| (L_n+\kappa)f\|_{L^2}
\leq \tfrac{3}{2} \| (L_0+\kappa)f\|_{L^2} ,
\end{equation*}
which proves~\eqref{L}.  The estimate \eqref{R} then follows by duality.
\end{proof}

\begin{prop}[The truncated Lax operators for \eqref{eq:CS}]
\label{prop:lax'}
Given $u\in L^2_+$ and $1\leq n\leq \infty$, the operator
\begin{equation*}
L_n f = -i\partial f \mp \Pi_n [ u \Pi_n (\ol{u} \Pi_nf) ]
\end{equation*}
with domain $D(L_n) = H^1_+$ is self-adjoint and semi-bounded.  Moreover, there is a constant $\kappa_0 = \kappa_0(u) \geq 1$ so that for all $\kappa\geq\kappa_0$, the resolvent $R_n(\kappa;u)$ of $L_n$ exists, maps $L^2_+$ into $H^{1}_+$, and 
\begin{align}
\tfrac12\|f\|_{H^1_\kappa} &\le \| (L_n+\kappa)f \|_{L^2} \le \tfrac32 \|f\|_{H^1_\kappa} ,
\label{L'} \\
\tfrac23\| f\|_{H^{-1}_\kappa} &\le\;\; \| R_n(\kappa)f \|_{L^2}\;\; \le2 \| f\|_{H^{-1}_\kappa}
\label{R'}
\end{align}
uniformly for $1\leq n\leq \infty$.
\end{prop}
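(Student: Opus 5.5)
We follow the proof of Proposition~\ref{prop:lax} and replace estimate \eqref{est 1} by Lemma~\ref{lem:CS}. The operator $L_0=-i\partial_x$ is self-adjoint on $L^2_+$ with domain $H^1_+$. The perturbation $V_n f := \Pi_n[u\Pi_n(\ol u\Pi_n f)]$ is symmetric, since $\langle V_n f,g\rangle = \langle \Pi_n \ol u \Pi_n f, \Pi_n\ol u\Pi_n g\rangle$; in fact it is nonnegative. For $f\in H^1_+$ we write $f = R_0(\kappa)(L_0+\kappa)f$ and use $\|\Pi_n\|_\op\le 1$, as in Remark~\ref{rem:pert}:
\begin{equation*}
\|V_n f\|_{L^2} \le \|u\Pi_n\ol u\Pi_n R_0(\kappa)\|_\op \,\|(L_0+\kappa)f\|_{L^2}.
\end{equation*}
The key point is the second half of Lemma~\ref{lem:CS}. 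The operator norm on the right tends to $0$ as $\kappa\to\infty$, \emph{uniformly} in $1\le n\le\infty$. We therefore choose $\kappa_0=\kappa_0(u)\ge1$ so that this norm is at most $\tfrac12$ for all $\kappa\ge\kappa_0$ and all $n$. This gives the analogue of \eqref{eq:BO-bdd}:
\begin{equation*}
\|V_n f\|_{L^2}\le \tfrac12\|(L_0+\kappa)f\|_{L^2},\qquad \kappa\ge\kappa_0,\ 1\le n\le\infty .
\end{equation*}
In particular, $V_n$ is infinitesimally $L_0$-bounded. Here $\kappa_0$ depends on $u$ itself, through the tail of its Fourier coefficients, and not only on $\|u\|_{L^2}$; this matches the statement. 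The Kato--Rellich theorem then shows that $L_n = L_0\mp V_n$ is self-adjoint on $H^1_+$ and bounded below by $-\kappa_0$. In the defocusing sign it is even bounded below by $0$, but we do not need this.

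Next we prove \eqref{L'}. Since $\|(L_0+\kappa)f\|_{L^2}=\|f\|_{H^1_\kappa}$ on $L^2_+$, the triangle inequality and the bound on $V_n$ give
\begin{equation*}
\tfrac12\|f\|_{H^1_\kappa}\le\|(L_n+\kappa)f\|_{L^2}\le\tfrac32\|f\|_{H^1_\kappa}.
\end{equation*}
The lower bound shows that $L_n+\kappa$ is injective with closed range. Because $L_n$ is self-adjoint with spectrum contained in $[-\kappa_0,\infty)$, the number $-\kappa$ lies in the resolvent set for $\kappa>\kappa_0$. For $\kappa=\kappa_0$ we may enlarge $\kappa_0$ slightly, or apply a Neumann series to $(L_n+\kappa)R_0 = I\mp V_nR_0$, whose perturbation part has norm at most $\tfrac12$. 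Either way, $R_n(\kappa)=R_0(\kappa)(I\mp V_nR_0(\kappa))^{-1}$ exists and maps $L^2_+$ into $H^1_+$.

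Finally, \eqref{R'} follows by duality. From \eqref{L'}, $(L_n+\kappa)$ is an isomorphism $H^1_\kappa\to L^2$ with the stated constants. Because $L_n$ is self-adjoint, $R_n(\kappa)$ is self-adjoint on $L^2_+$. Pairing with test functions and using the duality between $H^1_\kappa$ and $H^{-1}_\kappa$ under $\langle\cdot,\cdot\rangle$ (restricted to $L^2_+$) transfers the bounds $\tfrac23$ and $2$ to $\|R_n(\kappa)f\|_{L^2}$ against $\|f\|_{H^{-1}_\kappa}$. This is exactly the step used for \eqref{R}.

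The only genuinely new ingredient compared with Proposition~\ref{prop:lax} is choosing $\kappa_0$ uniformly in $n$. That uniformity is what the second part of Lemma~\ref{lem:CS} supplies. Without an explicit decay rate, it is the main point to check carefully, but it is already proved there.
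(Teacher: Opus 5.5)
Your proposal is correct and follows the paper's proof. You choose $\kappa_0(u)$ uniformly in $n$ via the second part of Lemma~\ref{lem:CS} to get the relative bound $\tfrac12$, apply Kato--Rellich, use the triangle inequality for \eqref{L'}, and use duality for \eqref{R'}. The extra details you supply — symmetry and nonnegativity of $V_n$, the Neumann-series formula for $R_n(\kappa)$, and the explicit duality computation — are all correct and fill in steps the paper leaves implicit.
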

\begin{proof}
Thanks to the convergence result of Lemma \ref{lem:CS}, there exists $\kappa_0 = \kappa_0(u) \geq 1$ such that for all $\kappa\geq\kappa_0$ we have
\begin{equation*}
\| \Pi_nu\Pi_n\ol{u}\Pi_n f \|_{L^2}
\leq \| \Pi_n \|_{\op} \| u\Pi_n\ol{u}\Pi_n R_0 \|_{\op} \| (L_0+\kappa)f\|_{L^2}
\leq \tfrac{1}{2} \| (L_0+\kappa)f\|_{L^2} 
\end{equation*}
for all $1\leq n\leq \infty$. 
 Therefore, it follows from the Kato--Rellich theorem \cite{ReedSimonvol2}*{Th.~X.12}, that $L_n$ is a self-adjoint and semi-bounded operator on the domain $H^1_+$. In fact, we once again have the stronger result that $\Pi_nu\Pi_n\ol{u}\Pi_n$ is infinitesimally small with respect to the operator $L_0 = -i\partial$ on $L^2_+$. 

 Therefore, it follows from the above inequalities that 
\begin{equation*}
\tfrac{1}{2} \| (L_0+\kappa)f\|_{L^2}
\leq \| (L_n+\kappa)f\|_{L^2}
\leq \tfrac{3}{2} \| (L_0+\kappa)f\|_{L^2} ,
\end{equation*}
which proves~\eqref{L'}.  The estimate \eqref{R'} then follows by duality.
\end{proof}

\subsection{Norm resolvent convergence}\label{sec:norm-res-cv}

\begin{lem}[Norm resolvent convergence for \eqref{eq:BO}]
\label{lem:resolv 2}
Given a real-valued function $u\in L^2$, the operators $L_n$ converge to $L_\infty$ in norm resolvent sense as $n\to\infty$.
\end{lem}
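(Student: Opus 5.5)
We fix $\kappa\geq\kappa_0(\|u\|_{L^2})$ as in Proposition~\ref{prop:lax}, so that all resolvents $R_n=R_n(\kappa;u)$, $1\le n\le\infty$, exist with the uniform bounds \eqref{L}--\eqref{R}. By definition of norm resolvent convergence (for self-adjoint operators it suffices to check a single point of the resolvent set, here $-\kappa$), we must show $\|R_n-R_\infty\|_{\op}\to0$. We begin with the second resolvent identity, valid since $D(L_n)=D(L_\infty)=H^1_+$:
\[
R_n - R_\infty = R_n\,(L_\infty - L_n)\,R_\infty = R_n\big(\Pi_n u\Pi_n - \Pi u\big)R_\infty .
\]
Since $R_\infty$ maps into $L^2_+$, we may replace $\Pi u$ by $\Pi u \Pi$. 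We then split the difference of perturbations as
\[
\Pi u\Pi-\Pi_n u\Pi_n = (\Pi-\Pi_n)u\Pi + \Pi_n u(\Pi-\Pi_n).
\]

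For the second term, we write $R_\infty=R_0(L_0+\kappa)R_\infty$ and observe that $(L_0+\kappa)R_\infty$ is bounded by $2$ on $L^2_+$, by \eqref{L} with $n=\infty$. Since $\Pi-\Pi_n$ commutes with $R_0$, we get
\[
\|\Pi_n u(\Pi-\Pi_n)R_\infty\|_{\op}\le \|u\,(\Pi-\Pi_n)R_0^{1/2}\|_{\op}\,\|R_0^{1/2}(L_0+\kappa)R_\infty\|_{\op}.
\]
Cleaner is to avoid square roots and use the $L^\infty$ embedding directly. For $g=R_\infty f\in H^1_+$ with $\|g\|_{H^1_\kappa}\le2\|f\|_{L^2}$, we have $\|u(\Pi-\Pi_n)g\|_{L^2}\le\|u\|_{L^2}\|(\Pi-\Pi_n)g\|_{L^\infty}$, and by Cauchy--Schwarz
\[
\|(\Pi-\Pi_n)g\|_{L^\infty}\le\sum_{k\ge n}|\wh g(k)|\le \Big(\sum_{k\ge n}(k+\kappa)^{-2}\Big)^{1/2}\|g\|_{H^1_\kappa}\lesssim n^{-1/2}\|f\|_{L^2}.
\]
Combined with $\|R_n\|_{\op}\le 1/\kappa$, or more crudely the uniform $H^{-1}\to L^2$ bound \eqref{R}, this term is $O(n^{-1/2})$.

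For the first term, we place the projection next to $R_n$ and use duality. Since $R_n$ is self-adjoint,
\[
\|R_n(\Pi-\Pi_n)u\Pi R_\infty\|_{\op}=\|R_\infty\Pi\,\ol{u}\,(\Pi-\Pi_n)R_n\|_{\op}
\]
($u$ is real, so $\ol u=u$). We run the same argument with $R_n$ in place of $R_\infty$, using that $\|R_nf\|_{H^1_\kappa}\le2\|f\|_{L^2}$ uniformly in $n$ by \eqref{L}. This again gives $O(n^{-1/2}\|u\|_{L^2})$, with $\|R_\infty\Pi\|_{\op}\le\kappa^{-1}\cdot 2$ or simply bounded.

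The main point requiring care is this bookkeeping. One must make sure the factor $(\Pi-\Pi_n)$ always lands adjacent to a resolvent mapping into $H^1_\kappa$ with bounds uniform in $n$, which is exactly what Proposition~\ref{prop:lax} provides. One alternative is to use \eqref{eq:decay} together with \eqref{est 1} after factoring $R_n=R_0\,(L_0+\kappa)R_n$, as the paper's Remark~\ref{rem:pert} suggests. But that route needs $(\Pi-\Pi_n)$ to be smoothed by $R_0$ while $u$ still sees an $R_0$, which requires the $L^\infty$ tail estimate above. In either form the bound is $\|R_n-R_\infty\|_{\op}\lesssim n^{-1/2}\|u\|_{L^2}\to0$, proving the lemma.
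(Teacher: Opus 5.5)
Your argument is correct. It shares the paper's skeleton: the second resolvent identity at $-\kappa$ with $\kappa\ge\kappa_0$, the same splitting $\Pi u\Pi-\Pi_nu\Pi_n=(\Pi-\Pi_n)u\Pi+\Pi_nu(\Pi-\Pi_n)$, and an adjoint to move the projection where it is needed. The difference is in how the estimates are distributed.

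The paper first factors
\[
\|R_n-R_\infty\|_{\op}\le\|R_nR_0^{-1}\|_{\op}\,\|R_0(\Pi u\Pi-\Pi_nu\Pi_n)R_0\|_{\op}\,\|R_0^{-1}R_\infty\|_{\op},
\]
and bounds the outer factors by $2$ using \eqref{L}--\eqref{R}. In each term, one copy of $R_0$ then absorbs $\Pi-\Pi_n$ via \eqref{eq:decay}, and the other absorbs $u$ via \eqref{est 1}. This gives $\|R_n-R_\infty\|_{\op}\le 8\sqrt3\,\kappa^{-1/2}\|u\|_{L^2}/n$, using the earlier lemmas as black boxes, and the same template carries over to \eqref{eq:CS}.

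You instead treat one resolvent as merely bounded on $L^2_+$. The other one ($R_\infty$, or $R_n$ after taking adjoints) pays for both $u$ and the high-frequency tail through $\|(\Pi-\Pi_n)g\|_{L^\infty}\le n^{-1/2}\|g\|_{H^1_\kappa}$. Because the $H^1_\kappa$ bound is spent once on the $L^\infty$ embedding, and the smoothing of the second resolvent is never used, only $n^{-1/2}$ survives. This is fully adequate for norm resolvent convergence and avoids interpreting $R_nR_0^{-1}$ as a bounded extension, but it is quantitatively weaker.

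Three points need cleaning up.
\begin{itemize}
\item The bound $\|R_n\|_{\op}\le1/\kappa$ is false in general. $L_n$ can have negative spectrum: for $u$ a positive constant $c$, the constant function $1$ is an eigenvector with eigenvalue $-c$. Then $\|R_n\|_{\op}=(\kappa+\inf\sigma(L_n))^{-1}>1/\kappa$. Use $\|R_n\|_{\op}\le2/\kappa$ from \eqref{R}, as you also suggest.
\item Your first displayed inequality with $R_0^{1/2}$ does not merely read less cleanly; it fails. The operator $u(\Pi-\Pi_n)R_0^{1/2}$ is unbounded for general $u\in L^2$ because $H^{1/2}\not\subset L^\infty$. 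Delete it.
\item Your closing remark misdescribes the route through \eqref{eq:decay} and \eqref{est 1}. With $R_0$ placed on both sides of the perturbation difference, each term has one $R_0$ for $\Pi-\Pi_n$ and one for $u$. No tail estimate is needed, and the resulting rate is $1/n$, not $n^{-1/2}$.
\end{itemize}
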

\begin{proof}
Let $\kappa_0$ denote the constant from Proposition~\ref{prop:lax}.
It suffices to show that for any $\kappa\geq\kappa_0$, we have
\begin{equation*}
\| R_n(\kappa) - R_\infty(\kappa) \|_{\op} \xrightarrow{n \rightarrow \infty} 0  .
\end{equation*}
  Using the resolvent identity 
  \[R_n - R_{\infty} = R_{n}(L_\infty - L_n )R_{\infty} = - R_n (\Pi u\Pi  - \Pi_nu\Pi_n)R_\infty ,
  \]
  we have
  \[
  \| R_n - R_{\infty}\|_{\op} \le \| R_{n}R_0^{-1}\|_{\op} \| R_0(\Pi u\Pi  - \Pi_nu\Pi_n)R_0\|_{\op} \| R_{0}^{-1}R_{\infty}\|_\op.
  \]
  Using inequality \eqref{R}, for $f\in L^2_{+}$ we have
  \[
  \| R_{n}R_0^{-1}f \|_{L^2} \le 2 \| R_0^{-1}f\|_{H^{-1}_{\kappa}} = 2\| f\|_{L^2},
  \]
  hence the first factor is bounded by $ \| R_{n}R_0^{-1}\|_{\op} \le 2$.
  Next, applying \eqref{L} in the case $n=\infty$, we have
  \[
  \| R_{0}^{-1}R_{\infty} f\|_{L^2} = \| R_{\infty} f\|_{H^1_{\kappa}} \le 2\|(L_{\infty} + \kappa) R_{\infty} f \|_{L^2} = 2 \| f\|_{L^2},
  \]
  hence the last factor is bounded by $\| R_{0}^{-1}R_{\infty}\|_\op \le 2$.
Therefore, to finish the proof, it suffices to show
\begin{equation*}
\| R_0(\Pi u\Pi - \Pi_nu\Pi_n) R_0\| _{\op} \le \| R_0(\Pi-\Pi_n) u \Pi R_0 \|_{\op}  + \| R_0\Pi_nu (\Pi-\Pi_n) R_0 \|_{\op}
\end{equation*}
converges to zero as $n \rightarrow \infty$.

For the first term, using \eqref{eq:decay} and \eqref{est 1} with $n=\infty$ we have
\[
\| R_0(\Pi-\Pi_n) u \Pi R_0 \|_{\op} \le \| R_0(\Pi-\Pi_n)\|_\op \| u \Pi R_0 \|_{\op} \le \frac{1}{n}\sqrt{3}\kappa^{-1/2}\| u\|_{L^2}.
\]

Next, recall that for a bounded operator $B$ on a Hilbert space, we have
\begin{equation}
\|B^*\|_{\op} = \|B\|_{\op}.
\label{B*B}
\end{equation}
Thus, the second term is bounded by
\begin{align*}
\| R_0\Pi_nu (\Pi-\Pi_n) R_0 \|_{\op} &\le \| R_0\Pi_n(u \ \cdot) \|_{\op}\|(\Pi-\Pi_n) R_0 \|_{\op}\\
& = \| u \Pi_n R_0\|_{\op}\| R_0 (\Pi-\Pi_n)\|_{\op}\\
& \le \frac{1}{n}\sqrt{3}\kappa^{-1/2}\| u\|_{L^2},
\end{align*}
where we again used estimates \eqref{est 1} and \eqref{eq:decay}. In conclusion,
\[
\| R_n(\kappa) - R_\infty(\kappa) \|_{\op} \le \frac{1}{n}8\sqrt{3}\kappa^{-1/2}\| u\|_{L^2} \xrightarrow{n \to \infty} 0. \qedhere
\]

\end{proof}

\begin{lem}[Norm resolvent convergence for \eqref{eq:CS}]
\label{lem:resolv 2'}
Given $u\in L^2_+$, the operators $L_n$ converge to $L_\infty$ in norm resolvent sense as $n\to\infty$.
\end{lem}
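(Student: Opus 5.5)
We follow the proof of Lemma~\ref{lem:resolv 2} and fix $\kappa\ge\kappa_0$, where $\kappa_0=\kappa_0(u)$ is the constant from Proposition~\ref{prop:lax'}. The resolvent identity gives
\[
R_n - R_\infty = \mp R_n\big(u\Pi\ol{u} - \Pi_n u\Pi_n\ol{u}\Pi_n\big)R_\infty .
\]
As before, \eqref{R'} yields $\|R_nR_0^{-1}\|_{\op}\le 2$, and \eqref{L'} with $n=\infty$ yields $\|R_0^{-1}R_\infty\|_{\op}\le 2$. It therefore suffices to show
\[
\big\|R_0\big(\Pi u\Pi\ol{u}\Pi - \Pi_n u\Pi_n\ol{u}\Pi_n\big)R_0\big\|_{\op}\xrightarrow{n\to\infty}0 .
\]
Here we use that on $L^2_+$ we have $u\Pi\ol u = \Pi u\Pi\ol u\Pi$, since $u\in L^2_+$ maps $L^2_+$ into $L^2_+$.

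We telescope the difference into three terms:
\[
R_0(\Pi-\Pi_n)u\Pi\ol{u}\Pi R_0 \;+\; R_0\Pi_n u(\Pi-\Pi_n)\ol{u}\Pi R_0 \;+\; R_0\Pi_n u\Pi_n\ol{u}(\Pi-\Pi_n)R_0 .
\]
The first term is bounded by $\|R_0(\Pi-\Pi_n)\|_{\op}\,\|u\Pi\ol u\Pi R_0\|_{\op}\le \frac1n\cdot 2\|u\|_{L^2}^2$ by \eqref{eq:decay} and \eqref{est 4}.

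For the third term we cannot simply move $(\Pi-\Pi_n)$ past $\ol u$. Instead we take adjoints using \eqref{B*B}. The adjoint of $R_0\Pi_n u\Pi_n\ol{u}(\Pi-\Pi_n)R_0$ is $R_0(\Pi-\Pi_n)u\Pi_n\ol u\Pi_nR_0$, which by \eqref{eq:decay} and \eqref{est 4} is at most $\frac1n\cdot2\|u\|_{L^2}^2$.

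The middle term is the main obstacle, because the high-frequency projection $\Pi-\Pi_n$ sits between the two copies of $u$ and no resolvent is adjacent to it. We factor it as
\[
R_0\Pi_n u(\Pi-\Pi_n)\ol u\Pi R_0 = \big(R_0\Pi_n u(\Pi-\Pi_n)\big)\big((\Pi-\Pi_n)\ol u\Pi R_0\big).
\]
The first factor is the adjoint of $(\Pi-\Pi_n)\ol u\Pi_n R_0$, so it suffices to show $\|(\Pi-\Pi_n)\ol u\Pi_m R_0\|_{\op}\to0$ uniformly in $1\le m\le\infty$. We bound the $L^2$ norm by the $L^2$ norm of Fourier coefficients. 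For $f\in L^2_+$,
\[
\widehat{\ol u\Pi_mR_0f}(k)=\sum_{l\ge \max(k,0)}^{m-1}\ol{\wh u(l-k)}\frac{\wh f(l)}{l+\kappa}.
\]
For $k\ge n$ this only involves $l\ge n$. We then apply Cauchy--Schwarz: the sum $\sum_{k\ge n}\big|\sum_{l\ge k}\ol{\wh u(l-k)}\wh f(l)/(l+\kappa)\big|^2$ is at most $\|u\|_{L^2}^2\sum_{k\ge n}\sum_{l\ge k}|\wh f(l)|^2/(l+\kappa)^2$. This in turn is at most $\|u\|_{L^2}^2\sum_{l\ge n}(l+1)|\wh f(l)|^2/(l+\kappa)^2\le \|u\|_{L^2}^2\,\|f\|_{L^2}^2/(n+\kappa)$. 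Hence
\[
\|(\Pi-\Pi_n)\ol u\Pi_mR_0\|_{\op}\le (n+\kappa)^{-1/2}\|u\|_{L^2}.
\]
Both factors of the middle term are therefore $O(n^{-1/2})$. This gives a rate of order $n^{-1}$ overall, uniformly in $\kappa\ge\kappa_0$, and completes the proof. The only delicate point is the correct index bookkeeping with the Hardy-space support $l\ge k$, which is exactly what makes this estimate work. If it failed, we would fall back on a compactness argument, using that $(\Pi-\Pi_n)\to0$ strongly while $\ol u\Pi R_0$ is compact, being a norm limit of finite-rank operators by Lemma~\ref{lem:CS}-type estimates.
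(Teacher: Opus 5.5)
Your proof is correct and is essentially the paper's proof. It uses the same resolvent identity, the bounds $\|R_nR_0^{-1}\|_{\op},\|R_0^{-1}R_\infty\|_{\op}\le 2$, the same three-term telescoping, and \eqref{eq:decay}, \eqref{est 4} plus an adjoint for the outer terms. The only difference is the middle term, which the paper simply notes vanishes identically because $\Pi_n u(\Pi-\Pi_n)=0$ for $u\in L^2_+$. Your own formula with $m=n$ already shows this, since the summation range $n\le k\le l\le n-1$ is empty. So your Cauchy--Schwarz bound there, while valid, is unnecessary, and so is the compactness fallback.
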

\begin{proof}

Let $\kappa_0$ denote the constant from Proposition~\ref{prop:lax'}.
Following the proof of Lemma \ref{lem:resolv 2}, it suffices to show that for any $\kappa\geq\kappa_0$, we have
  \[
  \| R_n - R_{\infty}\|_{\op} \le \| R_{n}R_0^{-1}\|_{\op} \|R_0 (\Pi u\Pi\ol u \Pi  - \Pi_nu\Pi_n \ol u \Pi_n)R_0\|_{\op} \| R_{0}^{-1}R_{\infty}\|_\op
  \]
  goes to zero as $n\to \infty$.
  
  Using inequalities \eqref{L'} and \eqref{R'}, we again have
  \[
   \| R_{n}R_0^{-1}\|_{\op} \le 2 \quad \text{and} \quad \| R_{0}^{-1}R_{\infty}\|_\op \le 2.
  \]
Therefore, to finish the proof, it suffices to show
\begin{align*}
\| R_0 (\Pi u\Pi\ol u \Pi  - \Pi_nu\Pi_n \ol u \Pi_n)R_0 \|_{\op} &\le  \|R_0(\Pi-\Pi_n) u \Pi \ol u \Pi R_0 \|_{\op} \\
&+ \| R_0\Pi_n u (\Pi-\Pi_n) \ol u \Pi R_0 \|_{\op}\\
&+\| R_0\Pi_n  u \Pi_n \ol u (\Pi-\Pi_n) R_0 \|_{\op}
\end{align*}
goes to zero as $n \rightarrow \infty$.
For the first term, using estimates \eqref{eq:decay} and \eqref{est 4}  we have
\[
\|R_0(\Pi-\Pi_n) u \Pi \ol u \Pi R_0 \|_{\op} \le \frac{1}{n}\| u \Pi \ol u \Pi R_0\|_{\op} \le \frac{2\| u\|^2}{n}
\]
which goes to zero as $n \to \infty$. Moreover, as $u\in L^2_+$, the operator $\Pi_n u(\Pi-\Pi_n)$ is zero, hence the second term vanishes.
Finally, for the third term, by noting that $(R_0\Pi_n  u \Pi_n \ol u)^* = u \Pi_n \ol u \Pi_n R_0$ and applying equation \eqref{B*B} we have
\[
\| R_0\Pi_n  u \Pi_n \ol u (\Pi-\Pi_n) R_0 \|_{\op} \le \| u \Pi_n \ol u \Pi_n R_0\|_{\op}\| (\Pi-\Pi_n) R_0 \|_{\op} 
\le \frac{2\| u\|^2}{n},
\]
where we once again used \eqref{eq:decay} and \eqref{est 4}. This finishes the proof.
\end{proof}

\subsection{The generated unitary groups}\label{sec:cv-group}

From Section \ref{sec:adj-equiv} we have that the truncated Lax operators $L_n$ for \eqref{eq:BO} and \eqref{eq:CS} are self-adjoint.  By the functional calculus, it follows that each of these operators generates a strongly continuous one-parameter unitary group $t\mapsto e^{itL_n}$ on $L^2_{+}$, see \cite{ReedSimonvol1}*{Th.~VIII.7}.

To conclude this section, we will prove the following result regarding the convergence of these unitary groups as $n\to\infty$.  A key ingredient of the proof are Lemmas~\ref{lem:resolv 2} and~\ref{lem:resolv 2'}, which establish that $L_n$ converges to $L$ in norm resolvent sense.  It is well-known that this implies $e^{itL_n}\to e^{itL}$ in the strong operator topology, for each time $t$, see for example \cite{ReedSimonvol1}*{Th.~VIII.20}.  By comparison, our result~\eqref{resolv} states that for the sequence of truncated Lax operators $L_n$, the convergence of the unitary groups $e^{itL_n}$ is uniform on both {\it compact time intervals} and {\it compact subsets of $L^2_+$}.
\begin{prop}\label{lem:resolv}
For $1\leq n\leq \infty$, let $L_n$ denote the truncated Lax operators for \eqref{eq:BO} or \eqref{eq:CS} constructed in Proposition~\ref{prop:lax} or~\ref{prop:lax'} for any fixed initial data $u$.  Then, for any $T>0$ and any compact set $\mc{F}\subset L^2_{+}$, we have
\begin{equation}
\sup_{f\in \mathcal{F}}\ \sup_{t\in[-T,T]}\, \| (e^{itL_n} - e^{itL_\infty})f \|_{L^2} \xrightarrow{n \to \infty} 0.
\label{resolv}
\end{equation}
\end{prop}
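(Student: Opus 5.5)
The plan is to first reduce to a dense class of nice vectors, and then use a Duhamel formula in which each factor is uniformly controlled.

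\emph{Reduction.} Since each $e^{itL_n}$ is unitary, the maps $f\mapsto (e^{itL_n}-e^{itL_\infty})f$ are $2$-Lipschitz on $L^2_+$, uniformly in $n$ and $t$. So a standard $\varepsilon/3$ argument applies. Cover the compact set $\mc F$ by finitely many $\varepsilon$-balls. Then it suffices to prove \eqref{resolv} for a single fixed $f\in L^2_+$. Density plus the same Lipschitz bound lets us further assume $f=R_\infty(\kappa)g$ with $g\in L^2_+$ and $\kappa\ge\kappa_0$ fixed. Indeed, the range of $R_\infty(\kappa)$ is $H^1_+$, which is dense.

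\emph{Duhamel step.} For such $f$, I compare the two groups through the resolvent. Write
\[
(e^{itL_n}-e^{itL_\infty})R_\infty g = (e^{itL_n}R_n - R_n e^{itL_\infty})g + e^{itL_n}(R_\infty-R_n)g + (R_n-R_\infty)e^{itL_\infty}g .
\]
Here I use that $R_\infty$ commutes with $e^{itL_\infty}$. By Lemmas~\ref{lem:resolv 2} and~\ref{lem:resolv 2'}, the last two terms are bounded by $2\|R_n-R_\infty\|_{\op}\|g\|_{L^2}\to0$, uniformly in $t$.

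For the first term, differentiate $s\mapsto e^{isL_n}R_n e^{i(t-s)L_\infty}g$ on $[0,t]$. Using $L_nR_n=I-\kappa R_n$, the derivative equals
\[
i e^{isL_n}(L_nR_n - R_nL_\infty)e^{i(t-s)L_\infty}g = i e^{isL_n}R_n(L_n-L_\infty)R_\infty\, e^{i(t-s)L_\infty}(L_\infty+\kappa)g .
\]
This requires $g\in D(L_\infty)$, which I can again assume by density. Since $R_n(L_n-L_\infty)R_\infty = R_\infty-R_n$, integrating over $[0,t]$ bounds the first term by
\[
|t|\,\|R_n-R_\infty\|_{\op}\,\|(L_\infty+\kappa)g\|_{L^2}.
\]
On $[-T,T]$ this is at most $T\|R_n-R_\infty\|_{\op}\|(L_\infty+\kappa)g\|_{L^2}\to0$.

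Alternatively, I could cite the known result that norm resolvent convergence gives uniform-on-compacts convergence of $\phi(L_n)\to\phi(L_\infty)$ for bounded continuous $\phi$. However, the direct Duhamel computation gives the time-uniformity explicitly.

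\emph{Main obstacle.} The delicate point is justifying the differentiation rigorously. The operators involved are unbounded, so I must check domains: $e^{i(t-s)L_\infty}g$ stays in $H^1_+$, and $R_n$ maps into $H^1_+=D(L_n)$, using the common domain from Propositions~\ref{prop:lax} and~\ref{prop:lax'}. I must also make sure the identity $L_nR_n-R_nL_\infty=R_n(L_n-L_\infty)$ holds on $H^1_+$. Since all $L_n$ share the domain $H^1_+$, this is legitimate. The remaining steps are bookkeeping of the $\varepsilon/3$ argument, which uses only unitarity.
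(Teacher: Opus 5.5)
Your proof is correct, and it takes a genuinely different route from the paper's.

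\textbf{The paper's route.} The paper never differentiates in time. It approximates $e^{itx}\chi_{\le M}(x)$ by polynomials in $t$ and $(x+\kappa)^{-1}$ via Stone--Weierstrass. It then transfers norm resolvent convergence to these polynomials through $\|R_n^m-R_\infty^m\|_{\op}\le 3^m\|R_n-R_\infty\|_{\op}$. Finally, it controls the spectral tail $\chi_{>M}(L_n)$ uniformly in $n$ on a finite $H^1_+$-net of $\mc{F}$, using the uniform norm equivalences \eqref{L} and \eqref{L'}. Since it only manipulates bounded functions of $L_n$, there is no domain bookkeeping. The cost is a two-stage approximation: first choose $M$, then the degree of $P$.

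\textbf{Your route.} You use the Duhamel interpolation $s\mapsto e^{isL_n}R_n e^{i(t-s)L_\infty}g$, which is essentially the classical proof of the Trotter--Kato approximation theorem. Your three-term decomposition, the derivative computation, and the identity $R_n(L_n-L_\infty)R_\infty=R_\infty-R_n$ all check out. Your argument is shorter, avoids the tail estimate altogether, and is quantitative. For $f=R_\infty(\kappa)g$ with $g\in H^1_+$ it gives
\[
\sup_{|t|\le T}\|(e^{itL_n}-e^{itL_\infty})f\|_{L^2}\le \big(2\|g\|_{L^2}+T\|(L_\infty+\kappa)g\|_{L^2}\big)\,\|R_n-R_\infty\|_{\op}.
\]
Combined with the $O(1/n)$ resolvent bounds obtained in the proofs of Lemmas~\ref{lem:resolv 2} and~\ref{lem:resolv 2'}, this yields an explicit rate $O((1+T)/n)$ on the dense set $R_\infty(H^1_+)=D(L_\infty^2)$. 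The paper's qualitative argument does not give such a rate.

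\textbf{Two small remarks.} First, your ``main obstacle'' is milder than you suggest. On $D(L_\infty)$ one has $L_nR_n-R_nL_\infty=I-R_n(L_\infty+\kappa)=(R_\infty-R_n)(L_\infty+\kappa)$, directly from $L_nR_n=I-\kappa R_n$ and $R_\infty(L_\infty+\kappa)=I$ on $D(L_\infty)$. So you do not even need the domains of $L_n$ and $L_\infty$ to coincide there. Second, the orbit $\{e^{i\tau L_\infty}(L_\infty+\kappa)g : |\tau|\le T\}$ is compact. Hence your argument would go through assuming only strong resolvent convergence.
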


\begin{proof}
We will present the argument in the settings of~\eqref{eq:BO} and~\eqref{eq:CS} in parallel, indicating the source of each ingredient for both cases when necessary.

Set $\kappa = \kappa_0$, where $\kappa_0 = \kappa_0(u)$ is defined in Proposition~\ref{prop:lax} and~\ref{prop:lax'} for~\eqref{eq:BO} and~\eqref{eq:CS} respectively.
For $M \ge 1$, let $\chi_{\leq M}: \mathbb{R} \to  [0,1]$ be a smooth function that is equal to one on $[-\kappa, M]$, and goes to zero as $x\to +\infty$. We also let $X = [-T,T] \times [ -\kappa, +\infty)$ and define
\[ g \colon
\begin{array}{rccl}
 &X & \longrightarrow & \mathbb{R} \\
&(t, x) & \longmapsto & e^{itx} \chi_{\leq M}(x).
\end{array}\]

By the Stone--Weierstrass theorem, polynomials in $t$ and $(x+\kappa)^{-1}$ are dense in $C_{0}(X)$.  Therefore, for any $\epsilon>0$ there exists a polynomial $P(t,y)$ 
such that
\[
\sup_{(t,x)\in X} \left|P\big(t,(x+\kappa)^{-1}\big) - g(t,x)\right| \le \epsilon.
\] 
Hence, by the functional calculus we have
\begin{equation}\label{eq:secondT}
\sup_{\substack{t \in [-T,T] \\[.2em] 1\le n \le \infty }} \left\| P\big(t, (L_n + \kappa)^{-1}\big) - g(t, L_n)\right\|_{\op} \le \epsilon.
\end{equation}

We can then decompose the error into three terms,
\begin{align}\label{eq:3terms}
\| (e^{itL_n} - e^{itL_\infty})f \|_{L^2}\nonumber
{}\leq{}& 2\sup_{\substack{t \in [-T,T] \\[.2em] 1\le n \le \infty }}\| e^{itL_{n}}f - g(t, L_n)f \|_{L^2} \\
&+ 2C\sup_{\substack{t \in [-T,T] \\[.2em] 1\le n \le \infty }} \big\|  P\big(t, (L_n+\kappa)^{-1}\big) - g(t, L_n) \big\|_{\op} \\\nonumber
&+ C\sup_{t\in[-T,T]}\big\|  P\big(t, (L_n+\kappa)^{-1}\big) - P\big(t,(L+\kappa)^{-1}\big) \big\|_{\op},
\end{align}
with $C = \sup_{f\in \mathcal{F}} \| f\|_{L^2}$. The second term in \eqref{eq:3terms} can be taken arbitrarily small using \eqref{eq:secondT}. We now prove that the third term in \eqref{eq:3terms} goes to zero as $n\to \infty$. Indeed, for $m\in \mathbb{N}$ we have
\begin{align*}
R_n^{m} - R_\infty^{m} = R_n^{m-1} &\left( R_n - R_\infty \right) 
+\left(R_n^{m-1} - R_\infty^{m-1} \right)R_\infty,
\end{align*}
and hence, using \eqref{R} and \eqref{R'},
\[
\|R_n^{m} - R^{m}_\infty \|_\op \le 2^{m-1}\|R_n - R_\infty \|_\op + 2
\|R_n^{m-1} - R_\infty^{m-1}\|_\op.
\]
By induction, $\|R_n^{m} - R^{m}_\infty \|_\op \le 3^m \|R_n - R_\infty \|_\op$, which goes to zero as $n\to \infty$ by Lemmas~\ref{lem:resolv 2} and \ref{lem:resolv 2'}.
Next, by writing the polynomial as $P(t,y) = \sum_{\ell,m}\alpha_{\ell,m} t^{\ell} y^{m}$, it follows that
\[
\sup_{t\in[-T,T]}\|  P\big(t, (L_n+\kappa)^{-1}\big) - P\big(t,(L+\kappa)^{-1}\big)\|_{\op} \le \sum_{\ell, m} \alpha_{\ell,m} T^{\ell} \|R_n^{m} - R_\infty^{m} \|_\op \xrightarrow{n\to\infty} 0 .
\]
Finally, we show that the first term in \eqref{eq:3terms} goes to zero as $M \to \infty$, finishing the proof. As $\mathcal{F} \subset L^2_+$ is compact and $H^1_{+}$ is dense in $L^2_{+}$, we have that given any $\epsilon>0$ there exists $m\in\mathbb{N}$ and $\phi_1, \dots, \phi_m \in H^1_{+}$ such that 
\[
\forall f\in\mathcal{F}, \ \exists j \in \{1,\dots, m\},\ \|\phi_j - f\|_{L^2} \le \epsilon.
\]
Thus, by letting $\chi_{>M} = 1-\chi_{\le M}$ we have that for $t\in[-T,T]$ and $n\in\mathbb{N}$
\[
\| e^{itL_{n}}f - g(t, L_n)f \|_{L^2} \le \| \chi_{> M}(L_n)e^{itL_n}f\|_{L^2} \le \| \chi_{>M}(L_n) (f-\phi_i) \|_{L^2} + \| \chi_{>M}(L_{n})\phi_i\|_{L^2}.
\]
The first term in the right-hand side above is bounded by $\epsilon$ while for the second term, using the functional calculus and inequalities \eqref{R} and \eqref{R'}, we have
\[
\| \chi_{>M}(L_n) \phi_i\|_{L^2} \le \left\| \chi_{>M}(L_n)\frac{L_n+\kappa}{M+\kappa}\phi_i\right\|_{L^2}
\le \frac{1}{M} \| (L_n+ \kappa)\phi_i\|_{L^2}
\le \frac{3}{2M} \max_{1\le i\le m}\| \phi_i\|_{H^1_{\kappa}},
\]
which is independent of $n$ and goes to zero as $M\to \infty$.
To conclude the proof we take first $M$, then $\deg(P)$, and finally $n$ large enough, in order for the three terms in \eqref{eq:3terms} to be arbitrarily small. \qedhere

\end{proof}


\subsection{Proof of Theorem \ref{thm:main}}\label{sec:pfThms}
We divide the proof into two steps. First, we use Proposition~\ref{lem:resolv} to show weak $L^2$-convergence of the scheme $u_{K}(t)$ to the solution $u(t)$, uniformly for $t\in[-T,T]$. We then leverage to strong convergence using the boundedness of the $L^2$-norm of the scheme, see Proposition \ref{prop:L2 consv}.

\begin{proof}[Proof of Theorem \ref{thm:main}]
We will consider both the~\eqref{eq:BO} and~\eqref{eq:CS} cases simultaneously.  Indeed, the primary input will be Propositions~\ref{prop:L2 consv} and~\ref{lem:resolv}, which apply to both cases.

Fix $T>0$ and let $u_0\in L^2(\mathbb T)$ for \eqref{eq:BO}, and $u_0\in L^2_+$ in the case of~\eqref{eq:CS} with the additional assumption $\| u_0\|_{L^2} < 1$ in the focusing case. 
Let $K>0$ be the highest frequency in the approximation and $(n(k))_{k\ge 0}\in\mathbb N^{\mathbb{N}}$ be a sequence such that
\[
n(k)\xrightarrow[K\to\infty]{}\infty , \quad \text{for  each} \ k\ge 0.
\]

We begin by comparing the scheme $ u_K $ to the sequence $ w_K $, which is generated by the same recursion \eqref{eq:newScheme BO} as $ u_K $, but initialized with the seed $ w^0 = \Pi u_0 $ instead of $ u^0 = \Pi_{n(0)} u_0 $.
Namely, for $t \in [-T,T]$ and $0\leq k<K$ we define the Fourier coefficients of $w_K$ as
\begin{equation*}
\wh{w}_K(t,k) = \big\langle (e^{\pm it(1+2L_{n(k)})}S^{*}\big) \cdots \big(e^{\pm it(1+2L_{n(2)})}S^{*}\big) \big(e^{\pm it(1+2L_{n(1)})}S^{*}\big) \Pi u_0 , 1 \big\rangle 
\end{equation*}
and we set $\wh{w}_K(t,k) = 0$ for $k\geq K$. The above holds with a positive sign (+) for \eqref{eq:BO} and a negative sign (--)  for \eqref{eq:CS}.  For $j\in\mathbb{N}$, each operator $e^{itL_j}$ is unitary and $\|S^{*} \|_\op\le 1$, hence by Cauchy--Schwarz we have
\begin{align*}
\big| \widehat{w}_{K}(t,k) - \widehat{u}_K(t,k) \big| 
&= \big| \big\langle (e^{\pm it(1+2L_{n(k)})}S^{*}\big) \cdots \big(e^{\pm it(1+2L_{n(1)})}S^{*}\big) (\Pi - \Pi_{n(0)}) u_0 , 1 \big\rangle \big| \\
&\le \big\| (e^{\pm it(1+2L_{n(k)})}S^{*}\big) \cdots \big(e^{\pm it(1+2L_{n(1)})}S^{*}\big) (\Pi - \Pi_{n(0)}) u_0 \big\|_{L^2} \\
&\leq \| (\Pi - \Pi_{n(0)}) u_0 \big\|_{L^2} .
\end{align*}
The above right-hand side converges to zero as $n(0)\to\infty$. 
Therefore, for each $k\geq 0$, we have
\begin{equation}
|\widehat{w}_{K}(t,k) - \wh{u}_K(t,k)| \xrightarrow{ K\to\infty} 
0, \quad \text{ uniformly for }t\in [-T,T] .
\label{vk to wk}
\end{equation}
In view of this, it will suffice to study the behavior of the Fourier coefficients of $w_K$ rather than $u_K$.

For $k \ge 0$, we write
\begin{align*}
\widehat{w}_{K}(t,k) - \widehat{u}(t,k) = \sum_{j=1}^{k} \Big\langle Z_{k,j} \ \big( e^{\pm it(1+2L_{n(j)})} - e^{\pm it(1+2L)}\big) S^{*} (e^{\pm it(1+2L)}S^*)^{j-1}\Pi u_0 , 1 \Big\rangle,
\end{align*}
with $Z_{k,k} = \rm{Id}$ and $Z_{k,j-1} = Z_{k,j} \ e^{\pm it(1+2L_{n(j)})}S^{*}$. In particular, we have 
\[Z_{k,j} = e^{\pm it(1+2L_{n(k)})}S^{*}\cdots e^{\pm it(1+2L_{n(j+1)})}S^{*}, \quad j<k-1.
\]
Hence, the following bound holds
\begin{align*}
\big| \widehat{w}_{K}(t,k) - \widehat{u}(t,k) \big| \le \sum_{j=1}^{k} \big\| \big( e^{\pm 2itL_{n(j)}} - e^{\pm 2itL}\big) S^{*} (e^{\pm 2itL}S^*)^{j-1}\Pi u_0 \big\|_{L^2} .
\end{align*}
Next, notice that the operator 
\[ \begin{array}{rccl}
e^{itL} \colon &\mathbb{R} \times L^2_+ & \longrightarrow & L^2_+ \\
&(t, f) & \longmapsto & e^{itL} f
\end{array}\]
is jointly continuous.  Indeed, given a sequence $t_m\to t$ in $\R$ and $\phi_m\to \phi$ in $L^2_+$, we have
\begin{equation*}
\| e^{it_mL} \phi_m - e^{itL}\phi \|_{L^2}
\leq \| e^{it_m L} \|_{\op} \| \phi_m - \phi \|_{L^2} + \| e^{it_m L} \phi - e^{itL}\phi \|_{L^2} .
\end{equation*}
The right-hand side above tends to zero as $m \to \infty$, due to the bound $\|e^{it_m L}\|_{\mathrm{op}} = 1$ and the fact that the mapping $t \mapsto e^{itL}$ is strongly continuous (see for example \cite{ReedSimonvol1}*{Th.~VIII.7}).

Therefore, by composition of continuous applications, the mapping
\[ \begin{array}{ccl}
\mathbb{R} &\longrightarrow &L^2_{+}\\
t &\longmapsto& S^{*} (e^{\pm 2itL}S^*)^{j-1}\Pi u_0
\end{array}\]
is continuous for each $j = 1,\dots,k$. Hence,
for any $T>0$, the set
\begin{equation*}
\mc{F}_T = \big\{ S^{*} (e^{\pm 2itL}S^*)^{j-1}\Pi u_0 : t\in [-T,T],\ j=1,\dots, k \big\} 
\end{equation*}
is compact in $L^2_+$.  Applying Proposition~\ref{lem:resolv} to this set $\mc{F}_T$,
we conclude that for each $k\geq 0$ we have
\begin{equation*}
\widehat{w}_{K}(t,k) \xrightarrow{K\to\infty}
\widehat{u}(t,k) \quad\text{ uniformly for }t\in [-T,T] .
\end{equation*}
Comparing this with \eqref{vk to wk}, we see that for each $ 0 \le k < K$,
\begin{equation*}
\widehat{u}_{K}(t,k) \xrightarrow{K\to\infty}
\widehat{u}(t,k) \quad \text{ uniformly for }t\in [-T,T] .
\end{equation*}
In the \eqref{eq:BO} case, we use
the fact that the solution is real-valued and the definition of the scheme to see that the above convergence also holds for $-K < k\le 0$.

As a consequence, 
we conclude weak $L^2$ convergence:
\begin{equation}
u_{K}(t) \xrightharpoonup{K\to\infty}
u(t) \quad\text{ uniformly for }t\in [-T,T] .
\label{L2 wk}
\end{equation}
Indeed, by Proposition \ref{prop:L2 consv} the scheme is uniformly bounded and hence
\[
\|u(t) - u_K(t) \|_{L^2} \le \|u(t) \|_{L^2} + \| u_K(t)\|_{L^2} \le 2 \| u_0\|_{L^2}.
\]
Thereby, given $\epsilon > 0$ and a test function $\phi\in L^2$, for $ K_0 \in \mathbb{N}$ such that $ \sum_{|k|\ge K_0} |\widehat\phi(k)|^2 \le \epsilon^2$, we have
\[
|\langle u(t) - u_K(t), \phi\rangle| \le \sum_{|k|< K_0} |\widehat{\phi}(k)| \ |\widehat{u}(t,k) - \widehat{u}_{K}(t,k)| + 2\epsilon \| u_0\|_{L^2},
\]
where the first term can be made arbitrarily small, uniformly for $t\in [-T,T]$, by taking $K > K_0$ and $n(j)$ large enough for $0\le j< K_0$. This proves~\eqref{L2 wk}.

To finish the proof, we upgrade this to strong convergence:
\begin{equation}
\sup_{t\in[-T,T]} \| u_K(t) - u(t) \|_{L^2} \xrightarrow{K\to\infty}
0.
\label{L2 st 1}
\end{equation}
Using the boundedness of the $L^2$-norm of the numerical scheme \eqref{eq:l2-est}, we have
\begin{equation}\label{L2 st 2}
\begin{aligned}
\| u_K(t) - u(t) \|_{L^2}^2 
&= \| u_K(t) \|_{L^2}^2 + \| u(t) \|_{L^2}^2 - 2\Re\langle u_K(t), u(t) \rangle 
 \\
&\le 2 \| u(t) \|_{L^2}^2- 2\Re\langle u_K(t), u(t) \rangle\\
&= 2\Re \langle u(t) - u_K(t), u(t) \rangle.
\end{aligned}
\end{equation}
By the global well-posedness results for \eqref{eq:BO} and \eqref{eq:CS} discussed in the introduction, the mappings
\[ \eqref{eq:BO} : \begin{array}{ccl}
\mathbb{R} &\longrightarrow &L^2\\
t &\longmapsto& u(t)
\end{array} \quad \text{and} \quad
\eqref{eq:CS} : \begin{array}{ccl}
\mathbb{R} &\longrightarrow &L^2_{+}\\
t &\longmapsto& u(t)
\end{array}
\]
are continuous and thereby the orbits $\{ u(t) : t\in [-T,T] \}$ are compact in $L^2$ and $L^2_{+}$ respectively.  Therefore, given $\del>0$, there exist finitely many points $t_1,\dots,t_M \in [-T,T]$, such that the orbit can be covered by open balls in $L^2$, of radius $\del>0$, centered at $u(t_1),\dots,u(t_M)$.  Using the weak limit in equation \eqref{L2 wk}, for each $m=1,\dots,M$, we have
\begin{equation*}
\sup_{[-T,T]}\big| \langle u(t) - u_K(t), u(t_m) \rangle \big| \xrightarrow{K\to\infty}
0.
\end{equation*}
It follows from the above that given any $t\in [-T,T]$  and $\delta> 0$ there exists $m\in \{1,\dots,M\}$ such that $\| u(t) - u(t_m)\| \le \delta$ and hence
\begin{equation*}
\limsup_{K\to\infty} 
\big|\langle u(t) - u_K(t), u(t) \rangle \big| 
\leq \limsup_{K\to\infty}
\big| \langle u(t) - u_K(t), u(t_m) \rangle \big| + 2\del \| u(t) \|_{L^2} \\
= 2\delta\| u_0\|_{L^2},
\end{equation*}
uniformly for $t\in[-T,T]$.
The above holds for any $\del>0$, which allows us conclude that
\begin{equation*}
\sup_{t\in[-T,T]} \big| \langle u_K(t) - u(t), u(t) \rangle \big| \xrightarrow{K\to\infty}
0.
\end{equation*}
In view of the estimate \eqref{L2 st 2}, this implies the desired strong convergence \eqref{L2 st 1} of the schemes in $L^2$.
\end{proof}

\bibliographystyle{agsm}
\bibliography{refs}

\end{document}